\crefname{subsection}{Subsection}{Subsections}
\crefname{claim}{Claim}{Claims}
\crefname{problem}{Problem}{Problems}
\def\namedlabel#1#2{\begingroup
   \def\@currentlabel{#2}%
   \label{#1}\endgroup
}
\declaretheorem[name=Theorem, numberwithin=section]{theorem}
\declaretheorem[name=Lemma, sibling=theorem]{lemma}
\declaretheorem[name=Corollary, sibling=theorem]{corollary}
\declaretheorem[name=Conjecture, sibling=theorem]{conjecture}
\declaretheorem[name=Problem, sibling=theorem]{problem}
\declaretheorem[name=Question, sibling=theorem]{question}
\declaretheorem[name=Claim, sibling=theorem]{claim}
\declaretheorem[name=Claim, numbered=no]{claim*}
\newenvironment{proofofclaim}{\noindent{\emph{Proof of the Claim}:}}{\hfill$\Diamond$\medskip}
\declaretheorem[name=Observation, style=theorem, sibling=theorem]{observation}
\def\cqedsymbol{\ifmmode$\lrcorner$\else{\unskip\nobreak\hfil
\penalty50\hskip1em\null\nobreak\hfil$\lrcorner$
\parfillskip=0pt\finalhyphendemerits=0\endgraf}\fi}
\newcommand{\lcr}{\mathrm{lcr}}
\newcommand*\torso[1]{\llbracket #1 \rrbracket}
\newcommand{\Sep}{(Y,S,Z)}
\newcommand{\Kinf}{K_{\infty}}
\newcommand*\sg[1]{\{ #1 \}}
\let\le\leqslant
\let\ge\geqslant
\let\leq\leqslant
\let\geq\geqslant
\begin{document}

\title{Coarse geometry of  quasi-transitive graphs beyond planarity}

\author[L.~Esperet]{Louis Esperet}
\address[L.~Esperet]{Univ.\ Grenoble Alpes, CNRS, Laboratoire G-SCOP,
  Grenoble, France}
\email{louis.esperet@grenoble-inp.fr}

\author[U.~Giocanti]{Ugo Giocanti}
\address[U.~Giocanti]{Univ.\ Grenoble Alpes, CNRS, Laboratoire G-SCOP,
  Grenoble, France}
\email{ugo.giocanti@grenoble-inp.fr}

\thanks{The authors are partially supported by the French ANR Project
  GrR (ANR-18-CE40-0032), TWIN-WIDTH
  (ANR-21-CE48-0014-01), and by LabEx
  PERSYVAL-lab (ANR-11-LABX-0025).}

\date{}

\begin{abstract}
  We study geometric and topological properties of infinite graphs that are
  quasi-isometric to a planar graph of bounded degree. 
  We prove that every locally finite quasi-transitive graph excluding a minor is
  quasi-isometric to a planar graph of bounded degree. We use the
  result to give a simple proof of the result that finitely generated
  minor-excluded groups have Assouad-Nagata dimension at most 2
  (this is known to hold in greater generality, but all known
  proofs use significantly deeper tools). We also prove that every locally finite quasi-transitive graph that is
  quasi-isometric to a planar graph is $k$-planar
  for some $k$ (i.e.\ it has a planar drawing with at most $k$
  crossings per edge), and discuss a possible approach to prove the converse statement.
\end{abstract}

\maketitle

\section{Introduction}

Our work is motivated by a conjecture and a problem raised recently
by Georgakopoulos and Papasoglu \cite{GP23}, lying at the
intersection of metric graph theory and graph minor theory. Before we
state them, we first
need to introduce some terminology.

\smallskip

We say
that a graph
$H$ is a \emph{$k$-fat minor} of a graph $G$ if there exists a family
of connected subsets $(M_v)_{v\in V(H)}$ of $V(G)$ such that
\begin{enumerate}
\item for each $u\neq v\in V(H)$, $d_G(M_u, M_v)\geq k$;
\item for each $e=uv\in E(H)$ there is a path $P_e$ whose two endpoints lie in
  $M_u$ and $M_v$ and internal vertices are not in $\bigcup_{v\in
    V(H)}M_v$, and
  \item for every $e\neq e'\in E(H)$, $d_G(P_e, P_{e'})\geq k$ and for
    every $e=uv\in E(H)$ and $w\notin \sg{u,v}$, $d_G(P_e, M_w)\geq k$.
  \end{enumerate}
  A graph $H$ is an \emph{asymptotic minor} of $G$ if for every $k\geq 0$, $H$
is a $k$-fat minor of $G$.

\medskip

Let  $(X,d_X)$ and $(Y,d_Y)$ be two metric spaces. We say that $X$ is
\emph{quasi-isometric} to $Y$ if there is a map $f: X \rightarrow Y$
and constants $\varepsilon\ge 0$, $\lambda\ge 1$, and $C\ge 0$ such that
(i) for any $y\in Y$ there is $x\in X$ such that $d_Y(y,f(x))\le C$,
and (ii) for every $x_1,x_2\in X$, $$\frac1{\lambda}d_X(x_1,x_2)-\varepsilon\le d_Y(f(x_1),f(x_2))\le \lambda d_X(x_1,x_2)+\varepsilon.$$
It is not difficult to check that the definition is symmetric, and we
often simply say that $X$ and $Y$ are quasi-isometric. If 
condition (i) is omitted in the definition above, we say that $f$ is a
\emph{quasi-isometric embedding of $X$ in $Y$}.

\smallskip

We can view each graph $G$ as a metric space, by considering the natural shortest-path metric
associated to $G$.
A graph is \emph{locally finite} if every vertex has finite degree.
Georgakopoulos and Papasoglu conjectured the following \cite{GP23}.

\begin{conjecture}[Conjecture 9.3 in \cite{GP23}]\label{q1}
 If $G$ is locally finite, vertex-transitive and excludes some finite graph $H$ as an asymptotic minor, then $G$ is quasi-isometric to a planar graph.
\end{conjecture}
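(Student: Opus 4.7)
The plan is to deduce \cref{q1} from the main theorem announced in the abstract, according to which every locally finite quasi-transitive graph excluding a minor is quasi-isometric to a planar graph of bounded degree. The strategy is to coarsen $G$ into a locally finite quasi-transitive graph $G_k$ that is quasi-isometric to $G$ and in which the asymptotic $H$-minor-freeness of $G$ is promoted to ordinary $H$-minor-freeness; the main theorem applied to $G_k$ then yields a planar bounded-degree graph quasi-isometric to $G_k$, and hence to $G$. Concretely, for $k$ to be chosen later as a function of $|V(H)|$ and $|E(H)|$, I would let $N \subseteq V(G)$ be a maximal $k$-separated set and define $G_k$ on $N$ with an edge $xy$ whenever $d_G(x,y) \le 3k$. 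Standard arguments give that $G_k$ is locally finite and quasi-isometric to $G$, and although vertex-transitivity of $G$ need not pass to $G_k$, the residual symmetry inherited from $\Aut(G)$ acting on orbits of $N$ is enough to make $G_k$ quasi-transitive.

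The technical heart of the plan is to show that for $k$ large enough the graph $G_k$ is genuinely $H$-minor-free. Given a putative minor model $(M_v)_{v \in V(H)}$ in $G_k$, I would lift each $M_v$ back to $G$ by replacing every $G_k$-edge $xy$ inside $M_v$ with a $G$-geodesic of length at most $3k$, and similarly lift each witnessing edge between distinct branch sets. Because distinct representatives in $N$ are at $G$-distance at least $k$, each lifted branch set $\widetilde M_v \subseteq V(G)$ and each lifted connecting path has diameter of order $k$ in $G$, while distinct representatives appearing in the model are separated by at least $k$ in $G$. The goal is then to transform this into a $ck$-fat $H$-minor of $G$, where $c$ is a positive constant depending only on $|V(H)|$ and $|E(H)|$; choosing $k$ so that $ck$ exceeds any prescribed fatness threshold would contradict the asymptotic $H$-minor-freeness of $G$.

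I expect the principal obstacle to be the \emph{decongestion} step alluded to above: several $G_k$-edges of length up to $3k$ may share large portions of their $G$-geodesics, so the lifts $\widetilde M_v$ can overlap substantially, and even formally disjoint branch sets can be very close to one another in $G$. Turning a minor in $G_k$ into a genuine fat minor in $G$ therefore requires a systematic pruning and rerouting of branch sets and connecting paths while preserving their connectivity and their incidences with the representative set $N$. This is essentially a coarse linkage or coarse Menger-type statement of the flavor Georgakopoulos and Papasoglu highlight as central in their program \cite{GP23}, and where failures of coarse Menger are already known to cause serious difficulties; I expect establishing such a lifting lemma in the generality required here to be the main difficulty. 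A secondary technical point is to arrange the quasi-transitive action on $G_k$ coherently with the coarsening, but since the main theorem allows quasi-transitivity rather than demanding vertex-transitivity, any coarsening defined canonically from $G$ and its automorphism group should be sufficient.
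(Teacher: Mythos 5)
You should first note that the statement you are trying to prove is not a result of this paper at all: \cref{q1} is stated as an open conjecture (Conjecture~9.3 of \cite{GP23}), and the paper only establishes the much weaker variant in which $G$ excludes $H$ (equivalently $K_\infty$) as an ordinary minor, namely \cref{thm:qp}. So there is no proof in the paper to compare against, and any argument deducing \cref{q1} from \cref{thm:qp} would have to bridge the gap between asymptotic minor exclusion and genuine minor exclusion, which is precisely the open content of the conjecture.

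Your proposal does not close that gap. The crucial step is the one you yourself flag: showing that, for $k$ large, an $H$-minor model in the coarsened graph $G_k$ can be converted into a $ck$-fat $H$-minor of $G$. Two things go wrong. First, your intermediate claim that each lifted branch set $\widetilde M_v$ has diameter of order $k$ in $G$ is false: a branch set of a minor model in $G_k$ may contain arbitrarily many vertices of the net $N$, so its lift can have unbounded diameter, and nothing in the definition of a fat minor bounds it — the real issue is not diameter but the pairwise separation requirements. Second, those separation requirements (distance at least $k$ between distinct branch sets, between distinct connecting paths, and between paths and non-incident branch sets) simply do not follow from the construction: lifted geodesics realizing different $G_k$-edges can run arbitrarily close to, or through, one another, and pruning/rerouting them while keeping the branch sets connected and the incidences intact is a coarse Menger/coarse linkage statement. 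Such statements are now known to fail in general graphs, and the analogue of \cref{q1} without a transitivity hypothesis is false; hence any correct proof must exploit vertex-transitivity in an essential way, whereas your coarsening scheme uses it only to keep $G_k$ quasi-transitive. As it stands, the "decongestion" step is not a technical lemma left to the reader but the entire difficulty of the conjecture, so the proposal is a plan with the key idea missing rather than a proof.
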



It is natural to first prove this conjecture when $G$ excludes
some minor $H$ (instead of an \emph{asymptotic} minor, as defined above).
This suggests the following:
\begin{question}\label{q2}
  Is it true that if $G$ is locally finite, vertex-transitive and excludes some finite graph $H$ as a minor, then $G$ is quasi-isometric to a planar graph?
\end{question}

Our first result is a  positive answer to this question, in a slightly
stronger form. An infinite graph is \emph{quasi-transitive} if its
vertex set has finitely many orbits under the action of its
automorphism group.  Note that any vertex-transitive graph is
quasi-transitive, and that for quasi-transitive graphs, being locally
finite is equivalent to having bounded degree.

\smallskip

The \emph{countable clique}
$K_\infty$ is the graph with vertex set $\mathbb{N}$ in which every
two vertices are adjacent (every graph which excludes a finite or
countable graph $H$ as a minor also excludes $K_\infty$ as a
minor). We prove the following.

\begin{theorem}\label{thm:qp}
 Every locally finite quasi-transitive $K_{\infty}$-minor free
 graph is quasi-isometric to a planar graph of bounded degree.
\end{theorem}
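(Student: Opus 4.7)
The plan is to reduce to the case where $G$ excludes some finite complete graph $K_n$ as a minor, apply an automorphism-invariant tree decomposition to split $G$ into ``almost planar'' quasi-transitive pieces, replace each such piece by a planar bounded-degree graph to which it is quasi-isometric, and finally glue these planar graphs together along the tree structure.

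First, I would show that for a locally finite quasi-transitive graph, excluding $K_\infty$ as a minor already forces excluding some finite $K_n$. This is a compactness statement: if $G$ admitted a $K_n$-minor for every $n$, then by quasi-transitivity each such minor can be translated so that a chosen branch set meets a fixed vertex $v_0$, and by local finiteness a K\"onig-type diagonalization would assemble these translates into a $K_\infty$-minor, contradicting the hypothesis. This reduces the problem to the much more tractable minor-excluded setting.

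Next I would invoke a Robertson--Seymour-style structural result tailored to infinite quasi-transitive graphs: every locally finite quasi-transitive $K_n$-minor-free graph admits an $\Aut(G)$-invariant tree decomposition $(T,\mathcal{V})$ of bounded adhesion whose torsos are either finite or are quasi-transitive locally finite graphs embeddable in a surface of bounded Euler genus after the removal of a bounded number of apex vertices. Canonicity ensures each infinite torso inherits a quasi-transitive bounded-degree action. I would then argue that any such torso is quasi-isometric to a planar bounded-degree graph: the non-planar topology lives in a compact region of the surface which, modulo the group action, can be cut along finitely many curves to produce a planar drawing, while the bounded apex set contributes only a bounded-diameter modification that preserves bounded degree.

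Finally, I would glue the planar replacements along $T$: since adhesion sets are uniformly bounded, each planar replacement can be inserted into a face of its parent's drawing via a bounded-size gadget, and the gluing remains globally planar because the underlying identifications follow a tree pattern; the piecewise quasi-isometries combine, with bounded correction coming from the adhesion, into a global quasi-isometry between $G$ and the resulting planar bounded-degree graph. The main obstacle I anticipate is the second step: classical Robertson--Seymour is a finite-graph theorem, and obtaining a genuinely $\Aut(G)$-invariant version with sufficiently well-controlled torsos in the infinite quasi-transitive setting appears technically delicate and is likely the central contribution of the proof.
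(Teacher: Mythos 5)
Your overall scheme (canonical tree-decomposition, planar replacements for the pieces, gluing along the tree) is the right shape, but two of your steps hide genuine gaps. First, the structural black box you invoke is both stronger than what exists and heavier than what is needed. The paper's input (Theorem~\ref{thm: mainCTTD}, from \cite{EGL23}) gives, for locally finite quasi-transitive $K_\infty$-minor-free graphs directly, a canonical tree-decomposition of bounded adhesion with \emph{tight} edge-separations whose torsos are planar or of bounded size --- no genus, no apex vertices, and crucially no Robertson--Seymour graph minor structure theorem, which is the whole point of the argument. A canonical ``bounded genus plus apices'' decomposition for infinite quasi-transitive graphs is not available off the shelf, and even granting it, your claim that such a torso is quasi-isometric to a bounded-degree planar graph (``cut the compact non-planar region, the apices are a bounded-diameter modification'') is an unproven and nontrivial assertion. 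Your preliminary reduction from $K_\infty$-minor-freeness to $K_n$-minor-freeness is also not sound as sketched: minors do not behave compactly, and translating $K_n$-models so a branch set meets a fixed vertex and then diagonalizing \`a la K\"onig does not assemble a $K_\infty$-model (branch sets and connecting paths can escape to infinity or degenerate in the limit). That such graphs exclude a finite minor is true, but it is known as a \emph{consequence} of the structure theorem, not a cheap first step; and it is unnecessary here.

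Second, the gluing step glosses over exactly the points where the quantitative work lies. Gluing planar pieces along adhesion sets of size up to $k$ in a tree pattern need not produce a planar graph: already two planar graphs clique-summed along a triangle can contain $K_{3,3}$ (take $K_5$ minus an edge and glue a $K_4$ onto a separating triangle). This is why the paper does not identify adhesion sets at all: it takes disjoint copies of the torsos (spanning trees for the finite ones), joins consecutive copies by a \emph{single new edge} through one chosen vertex of the adhesion set, obtains a tree-decomposition of adhesion $1$ with planar torsos, and concludes planarity via Wagner's theorem. Likewise, your ``piecewise quasi-isometries combine with bounded correction'' needs the uniform bounds that the paper extracts from tightness of the edge-separations together with the Thomassen--Woess lemma (Lemma~\ref{lem:TWcut}): each vertex lies in only finitely many bags with uniformly bounded tree-diameter, there are finitely many orbits of adhesion sets (so two vertices in a common adhesion set are at uniformly bounded $G$-distance, and finite bags have uniformly bounded size and diameter), and the glued graph has bounded degree. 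Without these finiteness statements the correction terms in your global quasi-isometry are unbounded, so the final claim does not follow as stated.
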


The main technical tool that we use is a recent structural theorem on
locally finite quasi-transitive graphs excluding the countable
clique as a minor \cite{EGL23}, which shows that such graphs have a
canonical tree-decomposition in which all torsos are planar or finite
(see the next section for the definitions). Most importantly, this
result does not use the Robertson-Seymour graph minor structure
theorem.

We note that the result which allows us to construct the
quasi-isometry using the canonical tree-decomposition was also proved
recently (and 
independently) by MacManus \cite{Mac23} in a slightly different form (the ``if'' direction in his
Corollary C). Our proof is very similar to his.

\medskip

We now discuss several applications of Theorem \ref{thm:qp}.





\subsection*{Application 1. Beyond planarity}

A graph is \emph{$k$-planar} if it has a drawing in the plane in which each
edge is involved in at most $k$ crossings (note that with this terminology,
being planar is the same as being  0-planar). The \emph{local crossing
  number} of a graph $G$, denoted by $\lcr(G)$, is the infimum integer
$k$ such that $G$ is $k$-planar.

\smallskip

Georgakopoulos and Papasoglu raised the following problem \cite{GP23}.

\begin{problem}[Problem 9.4 in \cite{GP23}]\label{p1}
For any quasi-transitive graph $G$ of bounded degree, $G$
      is quasi-isometric to a planar graph if and only if $G$ has
      finite local crossing number.
\end{problem}


We prove that for any integer $k$, every bounded degree graph which is quasi-isometric to
a $k$-planar graph is $k'$-planar for some integer
$k'$. In
the particular case $k=0$, we immediately obtain the ``only if''
direction of Problem \ref{p1} (we recently learned from Agelos
Georgakopoulos that he also proved the case $k=0$ independently). In Section \ref{sec:ccl}, we raise a number of conjectures whose validity would
imply a positive answer to the ``if'' direction of Problem
\ref{p1}. In the case $k=0$, 
we also obtain our second application of Theorem \ref{thm:qp}:

\begin{theorem}\label{thm:kpg}
Every locally finite quasi-transitive graph $G$ which is
$K_\infty$-minor-free has finite local crossing number.
\end{theorem}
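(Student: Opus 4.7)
The plan is to combine Theorem~\ref{thm:qp} with the quasi-isometry invariance statement announced just above the theorem: for every integer $k$, any bounded-degree graph that is quasi-isometric to a $k$-planar graph is $k'$-planar for some integer $k'$. By Theorem~\ref{thm:qp}, the graph $G$ is quasi-isometric to a planar (hence $0$-planar) graph $H$ of bounded degree; applying the invariance statement with $k=0$ then yields an integer $k'$ with $\lcr(G)\le k'<\infty$. Thus the entire content of Theorem~\ref{thm:kpg} reduces to proving the invariance statement.

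For that invariance step, I would fix a plane drawing $D$ of $H$ and quasi-isometries $f\colon V(G)\to V(H)$ and $g\colon V(H)\to V(G)$ with parameters $(\lambda,\varepsilon,C)$, and construct a drawing of $G$ as follows: place each $v\in V(G)$ at a small perturbation of the point of $D$ representing $f(v)$, and draw each edge $uv\in E(G)$ as a slight perturbation of a shortest $f(u)$--$f(v)$ path $P_{uv}$ in $H$, which has at most $\lambda+\varepsilon$ edges because $d_G(u,v)=1$ forces $d_H(f(u),f(v))\le \lambda+\varepsilon$. The key quantitative input is a local-finiteness bound: using the lower QI inequality together with the bounded degrees, for every $h\in V(H)$ and every fixed radius $r$, the preimage $f^{-1}(B_H(h,r))$ has size bounded by a constant depending only on $r,\lambda,\varepsilon$, and $\Delta(G)$. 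Consequently, each edge of $H$ is traversed by boundedly many of the paths $P_{uv}$, and summing this bound over the at most $\lambda+\varepsilon$ edges of any particular $P_{uv}$ gives a uniform bound $k'$ on the number of crossings along the arc representing $uv$.

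The main obstacle I anticipate is the topological bookkeeping needed to turn the above scheme into a genuine plane drawing of $G$. One must perturb the arcs so that they meet only pairwise and transversally, and so that bundles of arcs traveling along the same edge of $H$ separate cleanly without producing spurious concurrent crossings. The way I would handle this is to thicken each edge of $H$ to a thin strip and each vertex of $H$ to a small disk in $D$, route the boundedly many arcs using a given edge of $H$ as parallel strands inside its strip, and then patch the strands together near each vertex of $H$ inside its disk. A little additional care is needed because several vertices of $G$ may have the same image under $f$, but the local-finiteness bound shows that the number of such vertices is itself uniformly bounded, so they can be accommodated inside the disk at $f(v)$ without breaking the bound on crossings per edge.
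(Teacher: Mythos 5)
Your overall skeleton is exactly the paper's: Theorem~\ref{thm:kpg} is deduced from Theorem~\ref{thm:qp} together with the quasi-isometry invariance statement (Theorem~\ref{thm:kp} in the paper), and your reduction of everything to that invariance step is correct, as is the quantitative counting (the lower quasi-isometry inequality plus $\Delta(G)<\infty$ does bound $|f^{-1}(B_H(h,r))|$, hence the number of paths $P_{uv}$ through any fixed vertex or edge of $H$). The gap is in the topological step, and it is precisely the point the authors flag and deliberately avoid: you ``fix a plane drawing $D$ of $H$'' and then thicken its edges to strips and its vertices to disks, but for an \emph{infinite} planar graph an arbitrary drawing need not admit such a thickening --- edges and vertices may accumulate, so a given edge of $H$ may have no neighbourhood disjoint from the rest of the drawing, and the ``parallel strands inside a strip'' routing breaks down. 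One first has to show that a locally finite planar graph admits a drawing in which every vertex and edge has a small neighbourhood meeting nothing else of the graph; this is true, but it is an additional argument that your proposal silently assumes (the paper remarks that such a drawing ``always exists but requires a little bit of work''). A second, smaller point: you should also argue that two arcs can be made to cross only a bounded number of times (e.g.\ at most once per shared vertex-disk), otherwise the per-edge crossing bound does not follow from the counting alone; this is routine but should be said.

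For contrast, the paper sidesteps all drawing issues on infinite graphs by a purely combinatorial reduction: Lemmas~\ref{lem: loc-finite} and~\ref{lem:kp1} and Observation~\ref{obs:pqi} show that a bounded-degree graph quasi-isometric to a graph of bounded local crossing number is a subgraph of $H^k$ for some planar graph $H$ of bounded degree (Corollary~\ref{cor:pqi}); the finite case of the drawing argument is then quoted from \cite{DMW23} (Lemma~\ref{lem:kp2fin}), and the extension to infinite locally finite graphs is done by a compactness argument via K\"onig's lemma (Lemma~\ref{lem:compac}, Corollary~\ref{lem:kp2}). Your route, once the existence of a ``clean'' drawing of $H$ is established, would give a self-contained direct construction with explicit constants; the paper's route buys simplicity by confining all topology to finite graphs, at the cost of invoking \cite{DMW23} and a compactness step. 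As written, though, your argument is incomplete at the thickening step.
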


The assumption that $G$ is locally finite is necessary, as shown by
the graph obtained from the square grid by adding a universal vertex
(this graph is $K_6$-minor free, but is not $k$-planar for any
$k<\infty$). The assumption that $G$ is quasi-transitive is also
crucial: consider for each integer $\ell$ a graph $G_\ell$
obtained from the square grid by adding an edge between  two vertices
at distance $\ell$ in the grid (if $G_\ell$ is $k$-planar then $k=\Omega(\ell)$),
and take the disjoint union of all graphs $G_\ell$, $\ell\in
\mathbb{N}$.

Note that in the other direction, there exist 1-planar graphs that are
vertex-transitive and locally finite, but which contain all graphs as
minors (the square grid with all diagonals is such an example).

\subsection*{Application 2. Assouad-Nagata dimension}

Our second application of Theorem \ref{thm:qp} requires the notions
of asymptotic dimension and Assouad-Nagata dimension of metric
spaces, which we introduce now. Let $(X,d)$ be a metric space, and let $\mathcal{U}$ be a family of subsets of
$X$. We say that $\mathcal{U}$ is \emph{$D$-bounded} if each set $U\in \mathcal{U}$ has diameter at
most $D$. We say that $\mathcal{U}$ is \emph{$r$-disjoint} if for any
$a,b$ belonging to different elements of $\mathcal{U}$ we have
$d(a,b)> r$.

\medskip

We say that $D_X:\mathbb{R}^+\to
\mathbb{R}^+$ is an
\emph{$n$-dimensional control function} for $(X,d)$ if for any $r>0$, $(X,d)$ has a cover
$\mathcal{U}=\bigcup_{i=1}^{n+1}\mathcal{U}_i$, such that each
$\mathcal{U}_i$ is $r$-disjoint and each element of $\mathcal{U}$ is $D_X(r)$-bounded. 
A control function $D_X$ for a metric space $X$ is said to be
a \emph{dilation} if there is a constant $c>0$ such that $D_X(r)\le
cr$, for any $r>0$.

\medskip

The \emph{asymptotic dimension} of $(X,d)$, introduced by Gromov in
\cite{Gro93},
is the
least integer $n$ such that $(X,d)$ has an $n$-dimensional control function. If no such integer $n$ exists, then the asymptotic
dimension is infinite. 
The \emph{Assouad-Nagata dimension} of $(X,d)$, introduced by Assouad
in \cite{Ass82},
is the least $n$ such that $(X,d)$ has an $n$-dimensional control function which is a
dilation. Clearly the asymptotic dimension is at most the Assouad-Nagata
dimension. 



\medskip

It was first proved  that planar graphs have Assouad-Nagata dimension
at most 3 \cite{FP21}, later improved to 2 by \cite{BBEGLPS,JL22}. It
was also proved in \cite{BBEGLPS} that any graph excluding a minor has
asymptotic dimension at most 2. This was finally extended by Liu in
\cite{Liu23}, who proved that any graph avoiding a minor has
Assouad-Nagata dimension at most 2 (a different proof was then given by
Distel in \cite{Dis23}). All the results on graphs excluding a minor
mentioned above (even earlier results in \cite{BBEGPS} for bounded degree graphs)
crucially rely on the Graph minor structure theorem by Robertson and
Seymour \cite{RS-XVI}, a deep result proved in a series of 16 papers.

\medskip

Using the invariance of Assouad-Nagata dimension under bilipschitz
embedding \cite{LS05}, we will deduce from Theorem \ref{thm:qp} that minor-excluded
quasi-transitive graphs have Assouad-Nagata dimension at most 2. A
finitely generated group is said to be \emph{minor-excluded} if it has
a Cayley graph which excludes a minor. Our result
directly implies that finitely generated minor-excluded groups have
Assouad-Nagata dimension at most 2, and thus 
asymptotic dimension at most 2  (which was originally
conjectured by Ostrovskii and Rosenthal in \cite{OR}).
Crucially, our
proof \emph{does not rely} on the Graph minor structure theorem of Robertson and
Seymour.

\section{Preliminaries}

All graphs in this paper are assumed to be infinite, unless stated
otherwise.

\smallskip

A {\it tree-decomposition} of a graph $G$ is a pair $(T,\mathcal{V})$ such that $T$ is a tree and $\mathcal{V}$ is a collection $(V_t: t \in V(T))$ of subsets of $V(G)$, called the {\it bags}, such that
	\begin{itemize}
		\item $\bigcup_{t \in V(T)}V_t = V(G)$,
		\item for every $uv \in E(G)$, there exists $t \in
                  V(T)$ such that $u,v\in V_t$, and
		\item for every $v \in V(G)$, the set $\{t \in V(T): v \in V_t\}$ induces a connected subgraph of $T$.
                \end{itemize}

                \smallskip
                
For a tree-decomposition $(T,\mathcal{V})$, the {\it width} of
$(T,\mathcal{V})$ is $\sup_{t \in V(T)}\lvert V
_t \rvert-1\in \mathbb N\cup \sg{\infty}$.
The {\it treewidth} of $G$ is the minimum width of a
tree-decomposition of $G$.

\medskip

The sets $V_t\cap V_{t'}$ for every $tt'\in E(T)$ are called the
\emph{adhesion sets} of $(T,\mathcal V)$ and the \emph{adhesion} of
$(T,\mathcal V)$ is the supremum of the sizes of its adhesion sets (possibly
infinite). For $t\in V(T)$, the \emph{torso} $G\torso{V_t}$ is the
graph obtained from $G[V_t]$ (the subgraph of $G$ induced by the bag
$V_t$) by adding all edges $uv$ with $u,v\in V_t$ for which there exist $t'$
such that $u$ and $v$ lie in the adhesion set $V_t\cap V_{t'}$.

\medskip

We say that a
tree-decomposition $(T,\mathcal V)$ is \emph{canonical}, if for every
automorphism $\gamma$ of $G$, $\gamma$ sends bags of $(T,\mathcal V)$ to bags, and
adhesion sets to adhesion sets. In other words, the automorphism group
of $G$ induces a
group action on $T$.

\medskip

A \emph{separation} in a graph $G=(V,E)$ is a triple $\Sep$
such that $Y,S,Z$ are pairwise disjoint, $V=Y\cup S\cup Z$ and there is no edge
between vertices of $Y$ and $Z$. 
The separation $\Sep$ is said to be
\emph{tight} if there are some components $C_Y,C_Z$ respectively of $G[Y],G[Z]$
such that $N_G(C_Y)=N_G(C_Z)=S$.

\medskip

Consider a tree-decomposition $(T,\mathcal V)$ of a graph $G$, with
$\mathcal{V}=(V_t)_{t\in V(T)}$. Let $A$ be an orientation of the
edges of $E(T)$, i.e.\ a choice of either $(t_1,t_2)$ or $(t_2,t_1)$
for every edge $t_1t_2$ of $T$. For a pair $(t_1,t_2)\in
A$, and for each $i\in \{1,2\}$, let $T_i$ denote the component of
$T-\{t_1t_2\}$ containing $t_i$. Then the \emph{edge-separation} of
$G$ associated to $(t_1,t_2)$ is $(Y_1,S,Y_2)$ with $S:= V_{t_1}\cap
V_{t_2}$ and $Y_i:=
\bigcup_{s\in V(T_i)} V_s\setminus S$ for $i\in \sg{1,2}$.

\medskip

We will need the main result of \cite{EGL23}, which gives the
structure of locally finite quasi-transitive graph excluding a minor
(note that this result does not use the Graph minor structure theorem by Robertson and
Seymour \cite{RS-XVI}).

\begin{theorem}[\cite{EGL23}]
 \label{thm: mainCTTD}
 Let $G$ be a locally finite quasi-transitive graph excluding $\Kinf$ as a minor. Then there is an integer $k$ such that $G$ admits a canonical tree-decomposition $(T,\mathcal V)$ of adhesion at most $k$, whose torsos have size at most $k$ or are planar.
 Moreover, the edge-separations of $(T,\mathcal V)$ are tight.
\end{theorem}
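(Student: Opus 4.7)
My plan is to build the canonical tree-decomposition from an $\Aut(G)$-invariant nested family of tight separations, following the Dunwoody--Kr\"on framework, which turns any such family into a canonical tree-decomposition whose separators are the adhesion sets and whose ``profiles'' correspond to the bags.

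First I would extract a uniform bound $k$ on the order of the separations to use. Because $G$ is locally finite, quasi-transitive and $\Kinf$-minor-free, I would argue by a compactness argument on finite balls combined with quasi-transitivity that there is a finite $k$ above which tight separations are not needed to split off ``non-planar pieces'': very roughly, a tight separation of order exceeding $k$, together with its quasi-transitively reproduced copies, would allow one to route arbitrarily many pairwise disjoint paths between distant ``lumps'', eventually producing a $\Kinf$-minor. Having fixed $k$, I would take the family $\S$ of all tight separations of order at most $k$, which is $\Aut(G)$-invariant by construction, and apply a Stallings--Dunwoody-style uncrossing procedure to produce a canonical nested subfamily $\N\subseteq\S$. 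The standard construction then yields a canonical tree-decomposition $(T,\mathcal V)$ of adhesion at most $k$ whose edge-separations are tight by construction.

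The bulk of the work lies in the torso analysis. Each torso $G\torso{V_t}$ inherits a quasi-transitive action from $\Stab_{\Aut(G)}(t)$ and remains $\Kinf$-minor-free, since any minor model inside $G\torso{V_t}$ lifts to a minor model in $G$ by replacing each virtual adhesion edge by a path certified by tightness of the corresponding edge-separation. If such a torso has more than $k$ vertices, I would show that it must be planar: by maximality of $\N$, the torso admits no further tight separation of order at most $k$, and so is ``highly connected'' in a precise sense; combining this with local finiteness, quasi-transitivity and $\Kinf$-minor-freeness, a Thomassen-type theorem on planarity of sufficiently connected locally finite quasi-transitive graphs excluding a countable clique minor should force planarity.

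The main obstacle, I expect, is this last planarity step: both transferring the tightness/connectivity/quasi-transitivity data cleanly to an infinite torso, and proving the required planarity theorem for highly connected quasi-transitive graphs, are substantial in their own right. An additional delicate technical point is verifying that the Stallings--Dunwoody uncrossing step can be performed in an $\Aut(G)$-invariant way while preserving tightness of the resulting edge-separations; in particular one must ensure that the canonical choice of $\N$ does not depend on arbitrary tiebreaks.
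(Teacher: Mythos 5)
First, note that this statement is not proved in the paper at all: it is the main result of \cite{EGL23}, quoted here as a black box, so there is no internal proof to compare against. Judged on its own merits, your outline has genuine gaps. The most concrete one is the very first step: it is not true that in a locally finite quasi-transitive $\Kinf$-minor-free graph one can bound the order of the tight separations that matter by arguing that a tight separation of large order, propagated by the group action, yields a $\Kinf$-minor. The square grid is planar, vertex-transitive and locally finite, yet it has tight separations of arbitrarily large order (take a long cycle separating inside from outside); the integer $k$ in the theorem is a bound on the adhesion of one particular decomposition that gets \emph{constructed}, not a bound on the orders of tight separations present in $G$, so this compactness/routing argument cannot produce it. Relatedly, starting from the family of \emph{all} tight separations of order at most $k$ and ``uncrossing'' is delicate: corners of crossing tight separations need not be tight, and making the nested subfamily canonical without arbitrary tie-breaks while preserving tightness of the induced edge-separations is exactly the kind of technical work you defer rather than carry out.

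The second, and decisive, gap is the planarity of the infinite torsos. Saying that a torso with no further tight separations of order at most $k$ is ``highly connected'' and that ``a Thomassen-type theorem should force planarity'' is to assume precisely the hard content of the theorem. The actual proof in \cite{EGL23} proceeds quite differently: it first builds a canonical tree-decomposition of adhesion at most $3$ into parts that are $3$-connected or small (in the spirit of Carmesin--Hamann--Miraftab), then refines the torsos via Grohe-style quasi-$4$-connected components, and the heart of the argument is a theorem that locally finite quasi-transitive quasi-$4$-connected graphs excluding $\Kinf$ as a minor are planar or have bounded treewidth; none of this is supplied or replaceable by a one-line appeal to a hoped-for result. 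Finally, your claim that a $\Kinf$-minor in a torso $G\torso{V_t}$ lifts to $G$ ``by replacing each virtual adhesion edge by a path certified by tightness'' needs more care: tightness gives, for each adhesion set, one connected component attaching to all of it, which does not immediately provide pairwise disjoint paths realizing many virtual edges simultaneously; this step is fixable (using that adhesion sets have bounded size), but as written it is another assertion rather than a proof. In sum, the skeleton is oriented roughly toward the right machinery, but each of the three pillars (the bound $k$, the canonical nested family, and planarity of large torsos) is either incorrect as argued or left unproven.
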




The property that the edge-separations $(T,\mathcal V)$  are tight in
the statement of Theorem \ref{thm: mainCTTD} will be particularly 
useful in combination with the following result of Thomassen and Woess
\cite{TW} (which was explicitly proved for transitive graphs, but the
same proof also holds for quasi-transitive graphs).
\begin{lemma}[Corollary 4.3 in \cite{TW}]
\label{lem:TWcut}
 Let $G$ be a locally finite graph. Then for every $v\in V(G)$ and $k\geq 1$, there is 
 only a finite number of tight separations $\Sep$ of order $k$ in $G$ such that $v\in S$.
 Moreover, if $G$ is quasi-transitive then for any $k\geq 1$, there is only a
 finite number of orbits of tight separations of order at
 most $k$ in $G$ under the action of the automorphism group of $G$. 
\end{lemma}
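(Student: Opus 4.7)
The plan is to reduce the finiteness of tight separations to the elementary fact that, for any two non-adjacent vertices $u, w$ in a locally finite graph, the number of minimum $(u,w)$-vertex-separators of size $k$ is finite. This will follow from Menger's theorem: fixing a family $P_1, \ldots, P_k$ of $k$ internally vertex-disjoint (and necessarily finite) $(u, w)$-paths, any minimum $(u,w)$-cut of size $k$ must consist of exactly one internal vertex from each $P_i$ (else some $P_i$ survives the cut, or the cut has more than $k$ vertices), giving at most $\prod_i (|V(P_i)|-2)$ possibilities.

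For the first statement, I would fix $v$ and $k$, and consider any tight separation $(Y, S, Z)$ of order $k$ with $v \in S$. The tightness forces $v \in N_G(C_Y) \cap N_G(C_Z)$, so I can select $u \in N(v) \cap C_Y$ and $w \in N(v) \cap C_Z$; by local finiteness of $G$, the pair $(u, w)$ ranges over a finite set of non-adjacent pairs (they lie in different sides of the separation, so no edge joins them). The key observation will be that $S$ is necessarily a \emph{minimum} $(u, w)$-separator of size exactly $k$: for each $s \in S$, the tightness provides neighbors $y_s \in C_Y$ and $z_s \in C_Z$, and concatenating a path in $C_Y$ from $u$ to $y_s$, the edges $y_s s$ and $s z_s$, and a path in $C_Z$ from $z_s$ to $w$, produces $k$ internally vertex-disjoint $(u, w)$-paths through the distinct vertices of $S$, so Menger's theorem forces $|S|$ to be the minimum cut size. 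Applying the Menger-based finiteness from the previous paragraph then bounds the number of candidate sets $S$; and for each such $S$, the graph $G - S$ has only finitely many components (under the implicit connectedness of $G$, every component of $G - S$ is adjacent to $S$, hence their number is at most $\sum_{s \in S}\deg_G(s)$), so only finitely many partitions of these components into $(Y, Z)$ remain.

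For the second statement, I would take representatives $v_1, \ldots, v_m$ of the finitely many orbits of $\Aut(G)$ on $V(G)$. Any tight separation $(Y, S, Z)$ of order at most $k$ has a non-empty separator $S$, and applying an automorphism sending some $s \in S$ to the corresponding $v_i$ produces an orbit representative with $v_i \in S$. Summing the finite bounds from the first part over all $i \in \{1, \ldots, m\}$ and all orders $j \in \{1, \ldots, k\}$ then gives finitely many orbits. I do not anticipate any significant obstacle; the only nontrivial ingredient is the Menger-based observation that tightness upgrades $S$ from a mere separator to a \emph{minimum} cut, after which everything reduces to standard bookkeeping enabled by local finiteness.
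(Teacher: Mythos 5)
Your reduction breaks at its central step. The $k$ paths you build through $S$ are not internally vertex-disjoint: for distinct $s,s'\in S$ the initial segments from $u$ to $y_s$ and from $u$ to $y_{s'}$ both run inside the connected set $C_Y$ and may share many vertices (likewise the final segments inside $C_Z$), so Menger gives you nothing here. Indeed the conclusion you want is false: tightness does \emph{not} make $S$ a minimum $(u,w)$-separator. For instance let $S=\{v,s_1,\dots,s_{k-1}\}$ and let $C_Y=\{u,c\}$ with $u$ adjacent to $v$ and to $c$, and $c$ adjacent to $s_1,\dots,s_{k-1}$; then $N_G(C_Y)=S$, yet $\{v,c\}$ already separates $u$ from $w$, so the minimum $(u,w)$-cut has size $2$ while $|S|=k$. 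Hence the appeal to your (correct, but inapplicable) finiteness statement for \emph{minimum} separators collapses. What tightness really gives is that $S$ is an inclusion-\emph{minimal} $(u,w)$-separator: each $s\in S$ has neighbours in both $C_Y$ and $C_Z$, so $S\setminus\{s\}$ no longer separates. But a set $S$ is a minimal $(u,w)$-separator exactly when the components of $G-S$ containing $u$ and $w$ both have neighbourhood equal to $S$, i.e.\ exactly when $S$ is the separator of a tight separation with $u,w$ on opposite sides. So once corrected, your first step merely restates the statement to be proved; the finiteness of such separators of bounded order through a fixed vertex is precisely the nontrivial content of the lemma, and it cannot be dispatched by the ``one cut vertex per path'' argument.

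For comparison: the paper does not reprove this lemma at all; it quotes it from Thomassen and Woess (Corollary 4.3, which rests on their Proposition 4.2), noting only that the argument extends verbatim from transitive to quasi-transitive graphs, and that proof is genuinely more delicate than a one-line Menger argument (local finiteness has to be exploited along the whole separator, not only at $v$). Your deduction of the second statement from the first is fine and standard: pick orbit representatives $v_1,\dots,v_m$ of the vertices, translate any tight separation of order at most $k$ by an automorphism so that its separator contains some $v_i$, and sum the finite bounds over $i$ and over the orders $1,\dots,k$. But as written the first, and main, statement remains unproved.
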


\section{Proof of Theorem \ref{thm:qp}}

We assume that $G$ is connected, since otherwise we can consider each
connected component separately.
By Theorem \ref{thm: mainCTTD}, $G$ has a canonical tree-decomposition
 $(T,\mathcal V)$ whose torsos $G\torso{V_t}$, $t\in V(T)$, are either
 planar or finite and whose adhesion sets have bounded size. For each $u\in V(G)$, we let $T_u$ be the subtree of $T$ with vertex set $\sg{t\in V(T), u\in V_t}$. Note that as the edge-separations of $(T, \mathcal V)$ are tight, Lemma \ref{lem:TWcut} implies that $T_u$ is finite for each $u$.
 
 
 We let $G'$ be the graph constructed as follow:
 for each $t\in V(T)$, we let $V'_t$ be a copy of $V_t$ and $G'_t$
 (with vertex set $V_t'$) be a copy of $G\torso{V_t}$ if $V_t$ is
 infinite, or a spanning tree of $G\torso{V_t}$  if $V_t$ is finite. For each $u\in V(G)$ and $t\in V(T_u)$, we let $u^{(t)}$ denote the copy of $u$ in $V'_t$.
 We let $V(G'):=\biguplus_{t\in V(T)}V'_t$. Now for every edge $st\in
 E(T)$, we choose an arbitrary vertex $u_{st}\in V_t\cap V_s$ (such a
 vertex exists, since $G$ is assumed to be connected). We let:
 $$E(G'):= \left(\biguplus_{t\in V(T)}E(G'_t)\right) \uplus \sg{u_{st}^{(s)}u_{st}^{(t)}, st\in E(T)}.$$

 We also let $T'$ be the 1-subdivision of $T$ (the graph obtained from
 $T$ by replacing each edge $e=st$ by a two-edge path $s,t_e,t$). Finally, for each $e=st\in E(T)$, we set $V'_{t_e}:=\sg{u_{st}^{(s)},u_{st}^{(t)}}$ and $\mathcal V':=(V'_t)_{t\in V(T')}$.
 We observe  that by definition,  $(T', \mathcal V')$ is a
 tree-decomposition of $G'$ whose adhesion sets all have size $1$. In
 particular, for every $t\in V(T')$, $G'[V_t']=G'\torso{V_t'}$. We
 also note that by the definition of $G'$ and Lemma \ref{lem:TWcut},
 $G'$ has bounded degree.

 \begin{claim}
  For every graph $G$, if $G$ has a tree-decomposition $(T,\mathcal V)$ such that every torso is planar and adhesion sets have size at most $1$, then $G$ is planar.
 \end{claim}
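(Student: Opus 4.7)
The plan is to establish planarity of $G$ by reducing to finite subgraphs and invoking the classical fact (sometimes called the compactness theorem for planarity) that a graph is planar if and only if each of its finite subgraphs is planar. The key preliminary observation is that since adhesion sets have size at most one, no edge is ever added when forming a torso, so $G\torso{V_t}=G[V_t]$ is planar for every $t\in V(T)$ by hypothesis.

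To show every finite subgraph $H$ of $G$ is planar, I would choose a finite subtree $T_H\subseteq T$ whose bags together cover all edges of $H$ (such $T_H$ exists since $E(H)$ is finite and every edge of $G$ lies in some bag), and induct on $|V(T_H)|$. The base case $|V(T_H)|\le 1$ is immediate. For the inductive step, pick a leaf $t$ of $T_H$ with unique neighbor $p$ in $T_H$, and split $E(H)$ into the edges with both endpoints in $V_t$, forming $H_1\subseteq G[V_t]$, and the remaining edges, forming $H_2$, whose edges all lie in bags indexed by $V(T_H)\setminus\{t\}$. Then $H_1$ is planar as a subgraph of $G[V_t]$, and by the induction hypothesis applied to the smaller subtree $T_H\setminus\{t\}$, so is $H_2$. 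The crucial point is that $H_1$ and $H_2$ share at most one vertex: if a vertex $v$ is incident to edges of both, then $v\in V_t$ and $v\in V_{t'}$ for some $t'\ne t$ in $T_H$, and the subtree property of tree-decompositions forces the $t$-to-$t'$ path in $T$ to be contained in $\{s:v\in V_s\}$; since $t$ is a leaf of $T_H$ this path passes through $p$, whence $v\in V_t\cap V_p$, a set of size at most one. Gluing the planar embeddings of $H_1$ and $H_2$ at this (possibly empty) shared vertex, using the classical fact that the $1$-sum of two planar graphs along a single vertex is planar (take planar embeddings with the shared vertex on the outer face and place one in a face-angle of the other at that vertex), yields a planar embedding of $H$.

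Once every finite subgraph of $G$ is known to be planar, the compactness theorem for planarity concludes that $G$ itself is planar. Note that in the intended application the graph $G'$ to which the claim is applied is countable, so the countable version of this compactness result suffices. The main obstacle is the shared-vertex calculation in the inductive step, which is precisely where the size-one bound on the adhesion is used; once that is checked, the $1$-sum gluing and the final compactness upgrade are standard.
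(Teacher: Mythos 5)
Your proof is correct, but it follows a genuinely different route from the paper's. The paper argues by contradiction in two lines: if $G$ contained a $K_5$ or $K_{3,3}$ minor, then (because the adhesion sets have size at most $1$, so these $2$-connected minors cannot spread over more than one part) some torso would contain that minor, contradicting planarity of the torsos; planarity of $G$ then follows from Wagner's theorem. You instead avoid excluded-minor characterizations altogether: you note that adhesion $\le 1$ forces $G\torso{V_t}=G[V_t]$, prove every finite subgraph planar by induction on a finite covering subtree, using the adhesion bound to see that the leaf part meets the rest in at most one vertex (your verification via the connectedness of $\{s:v\in V_s\}$ and the fact that the $t$--$t'$ path passes through $p$ is exactly right), glue $1$-sums of planar graphs, and finish with the Dirac--Schuster compactness theorem. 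The paper's argument is shorter but silently relies on two standard facts (that a $K_5$ or $K_{3,3}$ minor must live in a single torso of an adhesion-$\le 1$ decomposition, and an infinite form of Wagner's theorem); yours is longer but more self-contained and makes the place where adhesion $\le 1$ is used completely explicit. Your remark about countability is well taken, and in fact applies to the paper's proof too: for uncountable graphs neither finite-subgraph planarity nor exclusion of $K_5$ and $K_{3,3}$ minors implies embeddability in the plane (an uncountable disjoint union of copies of $K_{1,3}$ is a counterexample, since the plane admits only countably many pairwise disjoint triods), but in the intended application $G'$ is locally finite and countable, so both arguments go through. The only cosmetic omission is that your $T_H$ covers the edges of $H$ but not its isolated vertices; since adding isolated vertices preserves planarity, this is harmless.
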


 \begin{proofofclaim}
 If $G$ contains  $K_5$ or $K_{3,3}$ as a minor, then some torso of
 $(T,\mathcal V)$ must also contain $K_5$ or $K_{3,3}$ as a minor, which
 is a contradiction. The result then follows from Wagner's theorem
 \cite{Wag37} stating that any graph excluding $K_5$ and $K_{3,3}$ is planar.
 \end{proofofclaim}
 
We now construct a quasi-isometry $f$ from $G$ to $G'$. For each $u\in V(G)$, we choose some $t_u\in V(T_u)$ and set $f(u):=u^{(t_u)}$. We also let
$A_1:=\max\sg{\mathrm{diam}_G(V_t), V_t ~\text{is finite}}$, $A_2:= \max(1,\max\sg{|V_t|, V_t ~\text{is finite}})$ and
$B:=\max\sg{\mathrm{diam}_T(T_u), u\in V(T)}$, which all exist by
Lemma \ref{lem:TWcut}, as the edge-separations of $(T,\mathcal V)$ are
tight. We note that for each $t\in V(T)$ such that $V_t$ is finite,
since $G'[V_t']=G'\torso{V_t'}$ is connected, its diameter  is at most $A_2$.

\smallskip

We first show the following:  
  
\begin{claim}
\label{clm: A3}
 There exists a constant $C\geq 0$ such that for each $t\in V(T), u,v\in V_t$: 
 $$d_{G}(u,v)\leq C\cdot d_{G\torso{V_t}}(u,v).$$
\end{claim}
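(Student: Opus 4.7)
The plan is to show that every edge of $G\torso{V_t}$, whether a genuine edge of $G[V_t]$ or a virtual edge added to form the torso, has its endpoints at distance at most some universal constant $C$ in $G$, with $C$ independent of $t$. Once this is established, any shortest $uv$-path in $G\torso{V_t}$ can be expanded edge by edge into a $uv$-walk in $G$ of length at most $C\cdot d_{G\torso{V_t}}(u,v)$, which gives the desired inequality.

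First I would dispose of the trivial case: any edge of $G[V_t]\subseteq G\torso{V_t}$ already has endpoints at distance $1$ in $G$. So the only edges I need to control are the virtual ones. By definition, a virtual edge $uv\in E(G\torso{V_t})\setminus E(G[V_t])$ arises precisely when $u$ and $v$ both lie in some common adhesion set $V_t\cap V_{t'}$ of $(T,\mathcal V)$. By Theorem \ref{thm: mainCTTD} this adhesion set has size at most $k$, and since $G$ is connected its diameter in $G$ is finite. The heart of the proof is therefore to bound these diameters \emph{uniformly} over all adhesion sets of $(T,\mathcal V)$.

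This uniform bound is obtained via the canonicity of $(T,\mathcal V)$ combined with the tightness of its edge-separations. Because $(T,\mathcal V)$ is canonical, the group $\Aut(G)$ acts on $T$ and hence on the set of adhesion sets; and because every edge-separation of $(T,\mathcal V)$ is tight of order at most $k$, Lemma \ref{lem:TWcut} implies that these tight separations split into only finitely many $\Aut(G)$-orbits. Since graph distances are preserved by automorphisms, all adhesion sets in a given orbit have the same $G$-diameter, so there are only finitely many possible $G$-diameters of adhesion sets of $(T,\mathcal V)$; let $C_1$ be their maximum. Then setting $C:=\max(1,C_1)$, every edge of $G\torso{V_t}$ (real or virtual) has its endpoints at distance at most $C$ in $G$, and concatenation along a shortest torso path yields $d_G(u,v)\le C\cdot d_{G\torso{V_t}}(u,v)$.

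The only real difficulty is packaging the orbit-finiteness argument correctly: one needs that adhesion sets of $(T,\mathcal V)$ correspond to tight separations of $G$, which is exactly what the ``tight edge-separations'' conclusion of Theorem \ref{thm: mainCTTD} provides. Everything else is a direct consequence of the definition of the torso and of the tree-decomposition, and no tool deeper than Lemma \ref{lem:TWcut} is required.
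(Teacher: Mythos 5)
Your proposal is correct and follows essentially the same route as the paper: reduce to bounding the $G$-distance between endpoints of virtual torso edges, i.e.\ pairs lying in a common adhesion set, and use the tightness of the edge-separations together with Lemma~\ref{lem:TWcut} and quasi-transitivity to get finitely many $\Aut(G)$-orbits of such pairs, hence a uniform bound $C$, which then propagates along a shortest torso path. The only cosmetic difference is that you phrase the finiteness in terms of orbits of adhesion sets and their diameters, while the paper phrases it in terms of orbits of edges of $T$ and the resulting finitely many distance values; the content is identical.
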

\begin{proofofclaim}
By Lemma \ref{lem:TWcut}, $E(T)$ has finitely many orbits under the action of the
 automorphism group of $G$. Hence, up to automorphism there are only finitely many pairs $\sg{u,v}$ such that $u,v$ lie in a common adhesion set of $(T, \mathcal V)$. In particular, as $G$ is connected this means that the set of values $\sg{d_G(u,v), \exists st\in E(T), u,v\in V_s\cap V_t}$ admits a maximum $C$. The claim follows from this observation.
\end{proofofclaim}

We now show that there is a constant $\alpha>0$ such that for every
$u,v\in V(G)$ and every $f(u)f(v)$-path $P'$ in $G'$, there exists a
$uv$-path $P$ of size at most $\alpha\cdot |P'|$ in $G$. By taking
$P'$ to be a shortest path from $f(u)$ to $f(v)$ in $G'$, this will imply in particular that 
$d_G(u,v)\leq \alpha \cdot d_{G'}(f(u),f(v))$.

\begin{claim}
\label{clm: QI-sens-1}
For every $u,v\in V(G)$ and $t, s\in V(T)$ such that $u^{(t)}v^{(s)}\in E(G')$ we have
$$d_{G}(u,v)\leq \alpha:=\max(A_1,C).$$
\end{claim}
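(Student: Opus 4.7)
The plan is to dispatch on which of the two types of edges of $G'$ the edge $u^{(t)}v^{(s)}$ belongs to. By construction, $E(G')$ is the disjoint union of $\biguplus_{r \in V(T)} E(G'_r)$ and the set of \emph{bridging edges} $\{u_{s't'}^{(s')}u_{s't'}^{(t')} : s't' \in E(T)\}$, so there are exactly two cases to treat, and in each case I will read off an explicit upper bound on $d_G(u,v)$ from the definitions.

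If $u^{(t)}v^{(s)}$ is a bridging edge, then both endpoints are copies of the same vertex $u_{s't'} \in V(G)$, so $u = v$ and $d_G(u,v) = 0 \le \alpha$. If instead $u^{(t)}v^{(s)} \in E(G'_r)$ for some $r \in V(T)$, then since the copies $V'_r$ partition $V(G')$ we must have $t = s = r$; and since $G'_r$ is either a copy of $G\torso{V_r}$ (when $V_r$ is infinite) or a spanning tree of $G\torso{V_r}$ (when $V_r$ is finite), the pair $uv$ is an edge of the torso $G\torso{V_r}$, so $d_{G\torso{V_r}}(u,v) = 1$. Applying Claim~\ref{clm: A3} then yields $d_G(u,v) \le C \le \alpha$. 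In the finite-bag subcase one may alternatively bound $d_G(u,v) \le \diam_G(V_r) \le A_1 \le \alpha$, which is the reason the constant $\alpha$ takes the form $\max(A_1, C)$.

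I do not anticipate any real difficulty here: both cases fall out of the definitions directly, and the claim should really be viewed as a short bookkeeping step whose purpose is to set up the forthcoming bound $d_G(u,v) \le \alpha \cdot d_{G'}(f(u),f(v))$ by concatenating the estimates above along shortest paths in $G'$. The only mildly delicate point is the observation that an edge lying in $E(G'_r)$ must have both endpoints in $V'_r$ (so $t = s = r$), which relies on the fact that we took $V(G')$ to be a disjoint union of the $V'_r$.
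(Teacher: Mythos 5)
Your proof is correct and follows essentially the same route as the paper: a case split on whether the edge $u^{(t)}v^{(s)}$ is a bridging edge (forcing $u=v$) or an edge of some $G'_r$ (forcing $s=t=r$ and $uv\in E(G\torso{V_r})$), then invoking Claim~\ref{clm: A3}. The only cosmetic difference is that in the finite-bag subcase you bound via $C$ (with $A_1$ as the alternative), whereas the paper uses $A_1$ directly; either bound is at most $\alpha=\max(A_1,C)$, so the claim follows the same way.
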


\begin{proofofclaim}
 Assume first that $s=t$. If $V_t$ is finite, then $d_G(u,v)\leq A_1$. 
 If $V_t$ is infinite we must have $uv\in E(G\torso{V_t})$, and thus
 $d_{G}(u, v)\leq C$ by Claim \ref{clm: A3}.
 
 Assume now that $s\neq t$. Then by definition of $G'$, we must have
 $st\in E(T)$ and $u=v$, and thus $d_G(u,v)=0$.
\end{proofofclaim}

We now show that there exists a constant $\beta>0$ such that for every
$u,v\in V(G)$ and every $uv$-path $P$ in $G$, there exists a
$f(u)f(v)$-path $P'$ of size at most $\beta|P|$ in $G'$. This directly
implies that $d_{G'}(f(u),f(v))\le \beta\cdot d_G(u,v)$.

\begin{claim}
\label{clm: QI-sens-2}
For every $u,v\in V(G)$ and $t, s\in V(T)$ such that $uv\in E(G)$, $u\in V_t$ and $v\in V_s$ we have:
$$d_{G'}(u^{(t)},v^{(s)})\leq \beta:=(4A_2+2)B+A_2.$$
\end{claim}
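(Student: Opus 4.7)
The plan is to build an explicit walk in $G'$ from $u^{(t)}$ to $v^{(s)}$ of total length at most $\beta$, by exploiting the tree structure of $(T',\mathcal V')$. Since $uv\in E(G)$, some bag $V_r$ of $(T,\mathcal V)$ contains both $u$ and $v$, so $r\in V(T_u)\cap V(T_v)$; in particular $d_T(t,r)\leq B$ and $d_T(r,s)\leq B$. I will bound $d_{G'}(u^{(t)},v^{(s)})$ by the sum of three distances: from $u^{(t)}$ to $u^{(r)}$, from $u^{(r)}$ to $v^{(r)}$, and from $v^{(r)}$ to $v^{(s)}$.

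The middle term is easy. If $V_r$ is finite then $G'[V'_r]$ is connected of diameter at most $A_2$, so $d_{G'}(u^{(r)},v^{(r)})\leq A_2$. If $V_r$ is infinite then $uv$ is itself an edge of the torso $G\torso{V_r}=G'_r$, and $u^{(r)}v^{(r)}\in E(G')$, so the distance is $1\leq A_2$.

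For the first (and symmetrically the third) term, consider the $T$-path $t=t_0,t_1,\ldots,t_k=r$, of length $k\leq B$, which lies entirely in $T_u$. I concatenate a walk in $V'_{t_0}$, a bridge edge to $V'_{t_1}$, a walk in $V'_{t_1}$, and so on, where the walk inside $V'_{t_i}$ goes between the relevant copies of $u$, $u_{t_{i-1}t_i}$, and $u_{t_i t_{i+1}}$. At each endpoint bag ($i=0$ or $i=k$), we traverse a single adhesion set (distance $1$ in the infinite case since the two vertices are adhesion-set neighbors hence adjacent in the torso, and at most $A_2$ in the finite case by definition of $A_2$). At each interior bag $t_i$, we need to move from $u_{t_{i-1}t_i}^{(t_i)}$ to $u_{t_i t_{i+1}}^{(t_i)}$; this is the main obstacle. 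The key observation is that $u\in V_{t_i}$ (since $t_i\in V(T_u)$) and $u$ lies in both adhesion sets $V_{t_{i-1}}\cap V_{t_i}$ and $V_{t_i}\cap V_{t_{i+1}}$, so in the torso $G'_{t_i}$ we have $u\sim u_{t_{i-1}t_i}$ and $u\sim u_{t_i t_{i+1}}$, giving a walk of length at most $2$ via $u^{(t_i)}$ in the infinite case (and length at most $A_2$ in the finite case using the spanning tree of $G\torso{V_{t_i}}$). In either case we have a bound of $2A_2$, since $A_2\geq 1$.

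Summing over the $k$ bridges (contributing $k$), the two endpoint bags (each at most $A_2$), and the $k-1$ interior bags (each at most $2A_2$) yields $k+2A_2+(k-1)\cdot 2A_2=k(2A_2+1)\leq B(2A_2+1)$ for each of the first and third terms. Combined with the middle term $A_2$, this gives a total of $2B(2A_2+1)+A_2=(4A_2+2)B+A_2=\beta$, as desired.
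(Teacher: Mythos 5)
Your proof is correct and follows essentially the same route as the paper: both pass through a common bag $V_r$ containing the edge $uv$ (bounding the in-bag distance by $A_2$ via inequality analogous to the paper's (\ref{eq1})), and both bound $d_{G'}(u^{(t)},u^{(r)})$ by walking along the $T$-path inside $T_u$ through the chosen adhesion vertices $u_{t_{i-1}t_i}$, at cost $2A_2+1$ per tree edge, exactly as in the paper's inequality (\ref{eq2}); your per-bag accounting is just a regrouping of the paper's per-edge accounting and yields the same constant $\beta=(4A_2+2)B+A_2$.
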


\begin{proofofclaim}
First note that if $V_t$ is finite, then for each $u,v\in V_t$ we have:
$$d_{G'}(u^{(t)},v^{(t)})=d_{G'[V_t']}(u^{(t)},v^{(t)})\leq A_2.$$
If $V_t$ is infinite, then for each $u,v\in V_t$ such that $uv\in
E(G)$, we have $u^{(t)}v^{(t)}\in E(G')$ and thus
$d_{G'}(u^{(t)},v^{(t)})\le 1$. Since $A_2\geq 1$, it follows that for each $t\in V(T)$ and $u,v\in V_t$ such that $uv\in E(G)$, we have
\begin{equation}
\label{eq1}
d_{G'}(u^{(t)},v^{(t)})\leq A_2.
\end{equation}

Now let $u\in V(G)$ and $s,t \in V(T_u)$. We let $(s=t_0, t_1, \ldots,
t=t_\ell)$ be the shortest $st$-path in $T$. Note that it is also a
path in $T_u$, hence $\ell\leq B$. Recall that in the construction of
$G'$, we have chosen for each edge $st\in E(T)$ a vertex $u_{st}\in
V_s\cap V_t$ and we have added an edge in $G'$ between
$u_{st}^{(s)}\in V_s'$ and $u_{st}^{(t)}\in V_t'$. 
For each $i\in [\ell]$, we write 
$x_i:=u_{t_{i-1}t_i}\in V_{t_{i-1}}\cap V_{t_i}$ for the sake of
readability. Note that for each $i\in [\ell]$, $x_i$ might be equal to
$u$ and that both $u$ and $x_i$ lie in the adhesion set
$V_{t_{i-1}}\cap V_{t_i}$. This implies that $u^{(t_i)}$ and
$x_i^{(t_i)}$ are adjacent in $G'$ if $V_{t_i}$ is infinite, and
$d_{G'}(u^{(t_i)},x_i^{(t_i)})\le A_2$ otherwise. So
$d_{G'}(u^{(t_i)},x_i^{(t_i)})\le A_2$ in both cases, and similarly
$d_{G'}(u^{(t_{i-1})},x_i^{(t_{i-1})})\le A_2$. It follows that for each $i\in [\ell]$, we have
$$d_{G'}(u^{(t_{i-1})}, u^{(t_{i})}) \leq d_{G'}(u^{(t_{i-1})},
x_{i}^{(t_{i-1})}) + d_{G'}(x_{i}^{(t_{i-1})}, x_{i}^{(t_{i})}) +
d_{G'}(x_i^{(t_{i})}, u^{(t_{i})}) \leq 2A_2+1.$$ 
This implies that  for every $u\in V(G)$ and $s,t\in V(T_u)$
\begin{equation}
\label{eq2} 
d_{G'}(u^{(s)}, u^{(t)})\leq (2A_2+1)B.
\end{equation}

To conclude the proof of the claim, let $uv \in E(G)$. As $(T, \mathcal V)$ is a tree-decomposition, there exists some $t\in V(T)$ such that $u,v\in V_t$. Then:
$$d_{G'}(f(u), f(v))\leq d_{G'}(u^{(t_u)}, u^{(t)})+ d_{G'}(u^{(t)}, v^{(t)})+ d_{G'}(v^{(t)}, v^{(t_v)}),$$
thus by inequalities (\ref{eq1}) and (\ref{eq2}) we obtain
$d_{G'}(f(u), f(v))\leq (4A_2+2)B + A_2$.
\end{proofofclaim}


To prove that $f$ is a quasi-isometry, it remains to prove that each $y\in V(G')$ is at bounded distance in $G'$ from $f(V(G))$. For this, let $y\in V(G')$ and $t\in V(T), u\in V(G)$ be such that $y=u^{(t)}$.
Then by inequality (\ref{eq2}), $d_{G'}(y, f(u))\leq (2A_2+1)B$ so $f$ is
indeed a quasi-isometry.
This concludes the proof of Theorem  \ref{thm:qp}. \hfill $\Box$


\section{Beyond planarity}

In this section we prove Theorem \ref{thm:kpg}. We first show that for
graphs  of bounded degree, having finite local crossing
number is preserved under quasi-isometry.

\begin{theorem}
\label{thm:kp}
Let $G$ be a
graph of bounded degree which is
quasi-isometric to a graph $H$ of finite local crossing number. Then
$G$ also has finite local crossing number.
\end{theorem}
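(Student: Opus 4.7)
The plan is to take a $k$-planar drawing $D$ of $H$ and route each edge of $G$ through $D$, exploiting that the quasi-isometry $f:V(G)\to V(H)$ sends each $G$-edge to a short path in $H$. I would fix constants $\lambda\ge 1$, $\varepsilon\ge 0$, $C\ge 0$ for $f$, let $\Delta$ bound the degrees of $G$, and set $L:=\lfloor\lambda+\varepsilon\rfloor$. For each edge $uv\in E(G)$ one has $d_H(f(u),f(v))\le L$, so I would fix a shortest $f(u)f(v)$-path $P_{uv}$ in $H$, of length at most $L$.

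The key combinatorial step is that every edge (resp.\ vertex) of $H$ is used by only a uniformly bounded number of the paths $P_{uv}$. Indeed, if both $P_{uv}$ and $P_{u'v'}$ contain the $H$-edge $e=xy$, then $d_H(f(u),x)\le L$ and $d_H(f(u'),x)\le L$, so $d_H(f(u),f(u'))\le 2L$; the quasi-isometry inequality then gives $d_G(u,u')\le \lambda(2L+\varepsilon)$. Since $G$ has maximum degree $\Delta$, the set of such endpoints $u$ lies in a $G$-ball of bounded radius, hence has size at most some constant; multiplying by $\Delta$ yields a uniform bound $M$ on the number of $G$-edges whose path uses $e$. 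The same argument gives a uniform bound $M'$ on the number of $G$-edges whose path passes through a given $H$-vertex, and the special case $L=0$ shows that every fibre $f^{-1}(h)$ is finite.

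Next, I would construct a drawing of $G$ by placing each $v\in V(G)$ at a point close to $f(v)$ in $D$, perturbed so that distinct $G$-vertices occupy distinct positions, and drawing each $uv\in E(G)$ as a simple curve that follows $P_{uv}$ inside thin tubular neighborhoods of its $H$-edges and small open disks around its $H$-vertices. By a standard topological rerouting argument one can arrange that (i) within the tubular neighborhood of each $H$-edge the at most $M$ drawn $G$-edges using it appear as pairwise non-crossing parallel arcs, and (ii) within the small disk around each $H$-vertex the at most $M'$ drawn $G$-edges passing through it cross each other at most once per pair.

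Finally, I would count crossings per edge: a drawn $G$-edge $uv$ can be crossed by another drawn $G$-edge only at one of the at most $Lk$ crossings of $D$ involving an edge of $P_{uv}$ (contributing at most $LkM$ such crossings on $uv$), or inside one of the at most $L+1$ $H$-vertex disks met by $P_{uv}$ (contributing at most $(L+1)M'$ further crossings); crossings inside edge-neighborhoods are excluded by (i). This yields an absolute bound $k':=LkM+(L+1)M'$ on crossings per edge, so $G$ is $k'$-planar. The main obstacle is making the topological construction (i)--(ii) rigorous: one has to verify that the local rerouting around each $H$-edge and each $H$-vertex can be performed coherently, with the resulting curves remaining simple. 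This should be routine but fiddly; the cleanest presentation is probably to first subdivide $H$ enough to separate all the relevant neighborhoods, then handle each one independently.
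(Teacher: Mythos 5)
Your counting argument is sound: the multiplicity bounds $M$, $M'$ and the finiteness of fibres follow correctly from the lower quasi-isometry inequality together with the bounded degree of $G$, and \emph{given} the local rerouting properties (i)--(ii) the final bound $k'=LkM+(L+1)M'$ per edge is a correct count. This is essentially the drawing argument of \cite[Lemma 12]{DMW23} transplanted into the quasi-isometry setting, whereas the paper takes a different route: it first reduces (via Lemmas \ref{lem: loc-finite} and \ref{lem:kp1} and Observation \ref{obs:pqi}) to showing that $G$ is a subgraph of $H^k$ for a \emph{planar} graph $H$ of bounded degree, then quotes the finite case (Lemma \ref{lem:kp2fin}) as a black box and extends it to infinite graphs by a compactness argument (Lemma \ref{lem:compac}).

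The genuine gap is precisely the step you dismiss as ``routine but fiddly''. Your construction needs the drawing of (the relevant locally finite subgraph of) $H$ to be tame: every edge must have a thin tubular neighborhood, and every vertex a small disk, meeting no other vertices or edges of the graph except as prescribed. For an \emph{infinite} graph, an arbitrary drawing witnessing finite local crossing number need not have this property: edges and vertices may accumulate on a point or along an arc without crossing it (e.g.\ infinitely many disjoint arcs converging to an edge), in which case no such tubes or disks exist, and your rerouting cannot even start. Subdividing $H$ does not help, since the subdivided graph inherits the same pathological drawing; what you would actually need is to \emph{construct a new, tame drawing} with the same crossing bound, which is true but is exactly the nontrivial point the paper flags (``Such a drawing always exists but it requires a little bit of work'') and deliberately circumvents. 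The cleanest repair of your proof is the paper's device: run your tube-and-disk construction only on finite induced subgraphs of $G$ (where a finite drawing can always be perturbed to be tame), obtaining a uniform bound $k'$ independent of the subgraph, and then pass to the infinite graph by the compactness argument of Lemma \ref{lem:compac} (K\"onig's infinity lemma applied to the finitely many combinatorial $\ell$-planar embeddings of each finite subgraph). With that addition your approach goes through; without it, the topological step is an unproved assertion rather than a routine verification.
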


Note that in general, the property of being locally finite, or even of
having countably many vertices is not preserved under
quasi-isometry. The next lemma will be useful to make sure that we can
restrict ourselves to 
locally finite graphs in the remainder of the proof.

\begin{lemma}
  \label{lem: loc-finite}
  Let $G$ be a graph of bounded degree which is quasi-isometric to a
  graph $H$. Then $G$ is quasi-isometric to a subgraph $H'$ of $H$ of
  bounded degree.
\end{lemma}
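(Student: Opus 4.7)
The plan is to construct $H'$ as the union, inside $H$, of one short path per edge of $G$. Assume without loss of generality that $G$ is connected (otherwise treat each component separately) and let $\Delta$ denote its maximum degree. Fix a quasi-isometry $f : V(G) \to V(H)$ with constants $\lambda \geq 1$, $\varepsilon \geq 0$, $C \geq 0$, and set $L := \lambda + \varepsilon$. For every edge $uv \in E(G)$ the quasi-isometry inequality gives $d_H(f(u), f(v)) \leq L$, so I would pick once and for all a geodesic $P_{uv}$ from $f(u)$ to $f(v)$ in $H$, and let $H'$ be the subgraph of $H$ formed by the union of the $P_{uv}$.

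Checking that $f$ is still a quasi-isometry from $G$ to $H'$ should be routine. The lower bound $d_{H'}(f(u),f(v)) \geq d_H(f(u),f(v)) \geq \tfrac{1}{\lambda} d_G(u,v) - \varepsilon$ is inherited for free from $H' \subseteq H$. The upper bound will follow by concatenating $P_{u_0 u_1}, \dots, P_{u_{k-1} u_k}$ along a geodesic $u = u_0, \dots, u_k = v$ in $G$, which yields $d_{H'}(f(u),f(v)) \leq L \cdot d_G(u,v)$. The covering condition is immediate, since every vertex of $H'$ lies on some $P_{uv}$ and is therefore within $H'$-distance at most $L$ of a point of $f(V(G))$.

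The main obstacle will be bounding the degree of an arbitrary vertex $w \in V(H')$. Any edge of $H'$ at $w$ comes from some $P_{uv}$ with $w \in V(P_{uv})$, and each such path contributes at most two edges incident to $w$. If $w \in V(P_{uv})$ then both $f(u)$ and $f(v)$ lie in the $H$-ball of radius $L$ around $w$, so $u$ and $v$ both belong to
\[ S_w := \{\, x \in V(G) : d_H(f(x), w) \leq L \,\}. \]
Applying the quasi-isometric lower bound to any two $x, x' \in S_w$ gives $d_G(x, x') \leq \lambda(2L + \varepsilon)$, and since $G$ has maximum degree $\Delta$, the set $S_w$ is contained in a ball of fixed radius in $G$, hence of cardinality bounded by a constant depending only on $\Delta$, $\lambda$ and $\varepsilon$. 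Therefore the number of edges of $G$ with both endpoints in $S_w$ is bounded, and so is $\deg_{H'}(w)$.

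The two ingredients that make the degree estimate work are the finiteness and bounded size of balls in $G$, which rely crucially on the hypothesis that $G$ has bounded degree, and the quasi-isometric lower bound, which is precisely what turns a bounded $H$-ball around $w$ into a bounded $G$-ball via $f$. Once this degree bound is in hand, combining it with the previous paragraph produces the required bounded-degree subgraph $H' \subseteq H$ quasi-isometric to $G$.
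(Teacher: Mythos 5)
Your proposal is correct and takes essentially the same route as the paper: build $H'$ as the union of one short path $P_{uv}\subseteq H$ per edge $uv\in E(G)$, inherit the lower bound and covering from $H$, get the upper bound by concatenating paths along a $G$-geodesic, and bound $\deg_{H'}(w)$ by pulling the set of edges whose paths pass through $w$ back to a bounded-radius (hence bounded-size) ball in $G$ via the quasi-isometric lower bound. Your degree estimate is in fact written slightly more carefully than the paper's, which passes from an $H$-distance bound to a $G$-diameter bound implicitly.
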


\begin{proof}
 We let $f: V(G)\to V(H)$ and $A\ge 1$ be such that for each $x, x' \in V(G)$:
$$\frac{1}{A}\cdot d_G(x,x') - A \leq d_H(f(x), f(x'))\leq A \cdot d_G(x,x')+A,$$
and such that the $A$-neighborhood of $f(V(G))$ covers $H$. We also let $\Delta\in \mathbb N$ denote the maximum degree of $G$.
Note that for each $xy\in E(G)$, there exists a $f(x)f(y)$-path
$P_{xy}$ in $H$ such that $|P_{xy}|\leq A\cdot 1+1=2A$. We let $H'$ be the subgraph of $H$ given by the union of all such paths $P_{xy}$. 

We first observe that $H'$ is quasi-isometric to $G$, and that $f$
gives the corresponding quasi-isometric embedding. Note that for every
$z\in V(H')$, by construction there must be some edge $xy\in E(G)$
such that $z\in P_{xy}$. In particular, $d_{H'}(z,f(x))\leq 2A$. Note
that by construction we clearly have $d_{H'}(f(x), f(y))\leq
2Ad_G(x,y)$ for each $x,y\in V(G)$, and as $d_{H'}(f(x),f(y))\geq
d_H(f(x),f(y))$, $f$  indeed induces a quasi-isometric embedding between $G$ and $H'$.

Now we show that $H'$ has bounded degree. Let $z\in V(H')$ and $xy\in E(G)$ such that $z\in V(P_{xy})$. Then $d_{H'}(z, f(x))\leq 2A$ so $X:=\sg{x\in V(G), z\in V(P_{xy})}$ has diameter at most $4A$ in $G$. Note that as $G$ degree at most $\Delta$, we have $|X|\leq \Delta^{4A}$. In particular it implies that $H'$ has degree at most $\Delta^{4A}$. 
\end{proof}

Given a graph $G$ and an integer $k\ge 1$, the \emph{$k$-th power} of
$G$, denoted by $G^k$, is the graph with the same vertex set as $G$ in
which two vertices are adjacent if and only if they are at distance at
most $k$ in $G$. The \emph{$k$-blow-up} of $G$, denoted by $G\boxtimes
K_k$, is the graph obtained from $G$ by replacing each vertex $u$ by a
copy $C_u$ of the complete graph $K_k$, and by adding all edges
between pairs $C_u,C_v$ if and only if $u$ and $v$ are adjacent in $G$
(so that each edge of $G$ is replaced by a complete bipartite graph
$K_{k,k}$ in $G\boxtimes
K_k$). Quasi-isometries of bounded degree graphs are related to graph powers
and blow-ups by the following lemma.


\begin{lemma}\label{lem:kp1}
Let $H$ be a graph, and let $G$ be a
graph of degree at most $\Delta\in \mathbb N$ which is quasi-isometric to $H$. Then there is an integer $k$ such that $G$ is a subgraph of $H^k\boxtimes K_k$.
\end{lemma}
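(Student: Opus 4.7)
The plan is to use the quasi-isometry $f: V(G) \to V(H)$ directly to build an injective graph homomorphism $\varphi: V(G) \to V(H^k \boxtimes K_k)$ for $k$ chosen large enough. The first coordinate of $\varphi$ will simply be $f$; the blow-up coordinate will be used to disambiguate the (boundedly many) vertices of $G$ that map to the same vertex of $H$ under $f$.

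Concretely, I would fix a quasi-isometry $f: V(G) \to V(H)$ with constants $A \geq 1$ so that $\tfrac1A d_G(x,y) - A \leq d_H(f(x), f(y)) \leq A\, d_G(x,y) + A$, and extract two elementary consequences. First, if $uv \in E(G)$ then $d_H(f(u), f(v)) \leq 2A$, so provided $k \geq 2A$, either $f(u) = f(v)$ or $f(u)f(v) \in E(H^k)$. Second, if $f(u) = f(v)$ then $d_G(u,v) \leq A^2$, so each fiber $f^{-1}(w)$ is contained in a ball of radius $A^2$ in $G$. Because $G$ has maximum degree $\Delta$, this forces $|f^{-1}(w)| \leq N$ for some constant $N$ depending only on $\Delta$ and $A$ (one can take $N = \Delta^{A^2+1}$ for $\Delta \geq 2$, the small cases being trivial).

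Now set $k := \max(2A, N)$, and for each $w \in V(H)$ fix an arbitrary injection $\iota_w: f^{-1}(w) \to C_w$, where $C_w \cong K_k$ is the copy of $K_k$ attached to $w$ in $H^k \boxtimes K_k$; this is possible since $|f^{-1}(w)| \leq N \leq k$. Defining $\varphi(u) := \iota_{f(u)}(u)$ then yields an injection. For every $uv \in E(G)$, either $f(u) = f(v)$, in which case $\varphi(u)$ and $\varphi(v)$ are distinct vertices of the same $K_k$ and hence adjacent in the blow-up, or $f(u)f(v) \in E(H^k)$, in which case the blow-up places a complete bipartite graph between $C_{f(u)}$ and $C_{f(v)}$ so that $\varphi(u)\varphi(v)$ is again an edge.

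There is no real obstacle beyond correctly identifying $k$; the only point requiring care is the bound on fiber size, which is exactly where the bounded-degree hypothesis on $G$ is used. Without it a single vertex of $H$ could be the image of arbitrarily many vertices of $G$ and no finite $k$ would suffice, so the bounded-degree assumption is intrinsic to the statement.
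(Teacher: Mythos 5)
Your proposal is correct and follows essentially the same route as the paper's proof: bound the fibers of the quasi-isometry $f$ using the lower quasi-isometry inequality together with the degree bound, then map each fiber injectively into the clique over $f(u)$ and check adjacency via the upper inequality. The only differences are cosmetic (your explicit case split $f(u)=f(v)$ versus $f(u)\neq f(v)$, and a slightly larger but equally valid fiber bound), so there is nothing to add.
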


\begin{proof}
We let $A\ge 1$ and $f: V(G)\to V(H)$ be such that for each $x, x' \in V(G)$:
$$\frac{1}{A}\cdot d_G(x,x') - A \leq d_H(f(x), f(x'))\leq A \cdot d_G(x,x')+A,$$
and such that the $A$-neighborhood of $f(V(G))$ covers $H$. Note that for each $x,x'\in V(G)$ such that $f(x)=f(x')=y$ we must have $d_G(x,x')\leq A^2$, hence
$$|f^{-1}(y)|\leq B:=\Delta^{A^2}$$
for every $y\in V(H)$. We now show that $G$ is a subgraph of $H':=H^{2A}\boxtimes K_{B}$, which implies the lemma for $k:=\max(2A, B)$.

As in the definition of a blow-up, for each $v\in V(H)$  we
denote by $C_v$ the associated clique of size $B$ in $H'$. For every $v\in V(H)$ we fix an arbitrary injection $g_v: f^{-1}(v)\to C_v$, and define an injective mapping $g: V(G)\to V(H')$ by letting $g(x):=g_{f(x)}(x)$ for each $x\in V(G)$. In other words every two vertices of $G$ having the same image $v$ by $f$ are sent by $g$ in the same clique $C_v$ in $H'$.
By construction $g$ is injective, so we just need to check that it
defines a graph homomorphism to conclude that $G$ is a subgraph of
$H'$. Let $xy\in E(G)$. Then $d_H(f(x), f(y))\leq 2A$ so in particular
every vertex in $V_{f(x)}$ is at distance at most $2A$ to every vertex
in $V_{f(y)}$ in $H'$. In particular, this means that $g(x)g(y)\in
E(H')$, as desired.
\end{proof}

The next observation will allow us to slightly simplify the statement of
Lemma~\ref{lem:kp1}.

\begin{observation}\label{obs:pqi}
 For every graph $H$ of bounded maximum degree, and
 every integer $k$, there
 exists a graph $G$ of bounded maximum degree such that
 $\lcr(G)=\lcr(H)$ and 
 $H^k\boxtimes K_k$ is a subgraph of $G^{k+2}$.
\end{observation}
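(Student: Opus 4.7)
The plan is to let $G$ be the graph obtained from $H$ by attaching, at every vertex $v\in V(H)$, a set of $k-1$ new pendant vertices $v^{(2)},\ldots, v^{(k)}$ (setting $v^{(1)}:=v$). Since $H$ has bounded maximum degree and each vertex of $H$ receives $k-1$ new neighbors (and each pendant has degree $1$), the graph $G$ also has bounded maximum degree. Moreover, $H$ is an induced subgraph of $G$, so $d_G(u,v)\le d_H(u,v)$ for every $u,v\in V(H)$.

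Next, I will show that $\lcr(G)=\lcr(H)$. The inequality $\lcr(H)\le \lcr(G)$ is immediate, since $H$ is a subgraph of $G$ and restricting any drawing of $G$ to $H$ produces a drawing of $H$ in which each edge has at most the same number of crossings. For the reverse inequality, I would start from an optimal drawing of $H$ in the plane and, around each vertex $v$, choose an open disk $D_v$ small enough that it meets the drawing only in initial segments of edges incident to $v$ (and does not meet any other $D_u$). Inside $D_v$, one can place the pendants $v^{(2)},\ldots, v^{(k)}$ in a wedge between two consecutive edges incident to $v$ (or anywhere, if $v$ has degree $0$) and draw each pendant edge as a short arc from $v$. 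No new crossings are introduced, and therefore the resulting drawing of $G$ witnesses $\lcr(G)\le \lcr(H)$.

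Finally, to embed $H^k\boxtimes K_k$ as a subgraph of $G^{k+2}$, I will use the injection $\varphi\colon V(H^k\boxtimes K_k)\to V(G)$ defined by $\varphi(v,i):=v^{(i)}$. Consider an edge between $(u,i)$ and $(v,j)$ in $H^k\boxtimes K_k$. If $u=v$ (and $i\neq j$), then $v^{(i)}$ and $v^{(j)}$ are both at distance at most $1$ from $v$ in $G$, so $d_G(v^{(i)},v^{(j)})\le 2\le k+2$. If $u\neq v$ with $d_H(u,v)\le k$, then by the triangle inequality
\[
d_G(u^{(i)},v^{(j)})\le d_G(u^{(i)},u)+d_G(u,v)+d_G(v,v^{(j)})\le 1+d_H(u,v)+1\le k+2.
\]
In both cases $\varphi(u,i)\neq \varphi(v,j)$ and $d_G(\varphi(u,i),\varphi(v,j))\le k+2$, so $\varphi$ maps edges of $H^k\boxtimes K_k$ to edges of $G^{k+2}$, exhibiting the desired subgraph containment.

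The only slightly delicate step is the planar/drawing argument for $\lcr(G)\le \lcr(H)$, which is essentially the standard geometric observation that pendant vertices can always be added to a drawing without creating new crossings; everything else reduces to elementary bookkeeping with distances in $G$.
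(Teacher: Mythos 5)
Your proof is correct and follows essentially the same route as the paper: attach pendant vertices to each vertex of $H$ (the paper uses $k$ pendants where you use $v$ itself plus $k-1$ pendants, an immaterial difference), note that pendants change neither the maximum-degree bound nor the local crossing number, and map each copy $(v,i)$ to a vertex at distance at most $1$ from $v$ so that edges of $H^k\boxtimes K_k$ land within distance $k+2$ in $G$. The drawing step you flag as delicate is treated with the same brevity in the paper itself, so no further justification is expected here.
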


\begin{proof}
  Let $G$ be the graph obtained from $H$ by attaching to each vertex $k$
 pendant vertices of degree $1$. Note that the graph
 $H^k\boxtimes K_k$ is a subgraph of $G^{k+2}$. To see this, one can bijectively map
 each clique $C_v$ of $H$ for $v\in V(H)$ to the $k$ pendant
 vertices we attached to $v$ in $G$, and observe that it gives an
 isomorphism between the graph induced by these vertices in $G^3$,
 and $H\boxtimes K_k$. Since adding pendant vertices does not change
 the local crossing number, $\lcr(G)=\lcr(H)$. Moreover $G$ has
 bounded maximum degree if and only if $H$ has bounded maximum degree. 
\end{proof}

We can now combine the results above to deduce the following  corollary.

\begin{corollary}\label{cor:pqi}
If a graph $G$ with bounded degree is quasi-isometric to a graph of
bounded local crossing number, then there exists a planar graph $H$ of
bounded maximum degree and an integer $k$, such that $G$ is a subgraph
of $H^k$.
\end{corollary}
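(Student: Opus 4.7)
The plan is to chain together the three results already established in this section (Lemma~\ref{lem: loc-finite}, Lemma~\ref{lem:kp1}, Observation~\ref{obs:pqi}) to reduce the statement to an assertion about bounded-degree graphs of bounded local crossing number, and then to finish with a standard ``planarization'' trick.

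First, I would apply Lemma~\ref{lem: loc-finite} to replace the target of the quasi-isometry by a bounded-degree subgraph $H_1$ of the original graph; since taking a subgraph cannot increase the local crossing number, we keep $\lcr(H_1)\le c$ for some constant $c$. Then I would apply Lemma~\ref{lem:kp1} to get an integer $k_1$ such that $G$ is a subgraph of $H_1^{k_1}\boxtimes K_{k_1}$. Next, using Observation~\ref{obs:pqi} applied to $H_1$, there exists a bounded-degree graph $H_2$ with $\lcr(H_2)=\lcr(H_1)\le c$ such that $H_1^{k_1}\boxtimes K_{k_1}$ is a subgraph of $H_2^{k_1+2}$. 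Combining these two steps, $G$ is a subgraph of $H_2^{k_1+2}$, where $H_2$ has bounded degree and bounded local crossing number.

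The remaining (and really the only nontrivial) step is to convert $H_2$ into a genuinely planar graph $H$ of bounded degree at the price of taking a further power. For this, I would fix a drawing of $H_2$ in the plane with at most $c$ crossings per edge, and replace each crossing by a new degree-$4$ vertex. The resulting graph $H$ is planar, has bounded maximum degree (each original vertex keeps its degree in $H_2$, each new vertex has degree $4$), and each edge of $H_2$ corresponds to a path of length at most $c+1$ in $H$; hence $H_2$ is a subgraph of $H^{c+1}$. Using the elementary fact that $A\subseteq B$ implies $A^m\subseteq B^m$ and that $(B^a)^b=B^{ab}$, we conclude
\[
G \subseteq H_2^{k_1+2} \subseteq \bigl(H^{c+1}\bigr)^{k_1+2} = H^{(c+1)(k_1+2)},
\]
so setting $k:=(c+1)(k_1+2)$ yields the desired conclusion.

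I do not anticipate a real obstacle here: every step is either a direct invocation of an already-proved result in this section or the standard planarization of a $k$-planar drawing. The only small point to check carefully is that degrees remain bounded throughout the reductions, in particular when passing from $H_2$ to $H$, which is immediate from the construction.
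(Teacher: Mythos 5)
Your proposal is correct and follows essentially the same route as the paper's proof: combine Lemma~\ref{lem: loc-finite} and Lemma~\ref{lem:kp1} to embed $G$ in a power-blow-up of a bounded-degree, bounded-local-crossing-number graph, absorb the blow-up via Observation~\ref{obs:pqi}, and then planarize by replacing crossings with new vertices, noting that an $s$-planar graph $F_1$ is a subgraph of $F_2^{s+1}$ for the resulting planar graph $F_2$. The only difference is that you make the bookkeeping of exponents (the inclusions $A\subseteq B\Rightarrow A^m\subseteq B^m$ and $(B^a)^b\subseteq B^{ab}$) explicit, which the paper leaves implicit.
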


\begin{proof}
By Lemmas \ref{lem: loc-finite} and \ref{lem:kp1}, there is a graph $H_1$ of bounded local crossing
number and maximum degree and an integer $\ell$ such that  $G$ is a
subgraph of $H_1^\ell\boxtimes K_\ell$. By Observation \ref{obs:pqi},
there is a graph $H_2$ of bounded local crossing
number and maximum degree such that  $G$ is a subgraph of
$H_2^{\ell+2}$. Observe that every $s$-planar graph $F_1$ is a subgraph of
$F_2^{s+1}$, where $F_2$ is the planar graph obtained from $F_1$ by
placing a new vertex at each crossing (and note that if $F_1 $ has bounded
degree, then $F_2$ also has bounded degree). It follows that there is a
planar graph $H$ of bounded degree and an integer $k$, such that $G$ is a subgraph
of $H^k$.
\end{proof}

We now prove that bounded powers of planar graphs of bounded degree
are $\ell$-planar for some $\ell$. This was proved for finite graphs
in \cite[Lemma 12]{DMW23}.

\begin{lemma}[\cite{DMW23}]
\label{lem:kp2fin}
 Let $H$ be a finite planar graph of maximum degree at most $\Delta$ and let
 $G$ be a subgraph of $H^k$, for some integer $k$. Then $G$ is
 $\ell$-planar, for $\ell:=2k(k+1)\Delta^k$.
\end{lemma}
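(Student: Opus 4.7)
The plan is to produce a drawing of $G$ in which every edge is routed along a shortest path in $H$ between its endpoints, and to bound the local crossing number by counting, at each vertex of $H$, how many such routes can pass through it.

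More concretely, fix an arbitrary plane embedding of $H$. For each edge $uv \in E(G)$, since $G$ is a subgraph of $H^k$, there is a path $P_{uv}$ in $H$ from $u$ to $v$ with at most $k$ edges; fix one such $P_{uv}$. Draw the edge $uv$ of $G$ as a simple curve $\gamma_{uv}$ contained in a thin tubular neighborhood of $P_{uv}$: along each edge $e$ of $P_{uv}$ the curve runs parallel to $e$, and at each internal vertex $w$ of $P_{uv}$ the curve turns within a small disk $D_w$ around $w$. A generic perturbation ensures that all crossings are transverse and located inside such disks $D_w$ for internal vertices $w$ of the various $P_{u'v'}$; in particular, crossings never occur along edges of $H$, and the curves ending at a common endpoint $u$ can be arranged to leave it in a non-crossing fan.

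Next, bound the crossings on a fixed curve $\gamma_{uv}$. Since no crossings occur at the endpoints $u$ and $v$, every crossing of $\gamma_{uv}$ lies in a disk $D_w$ for some internal vertex $w$ of $P_{uv}$, of which there are at most $k-1$. Inside $D_w$, the number of crossings of $\gamma_{uv}$ is at most the number of other edges $u'v' \in E(G)$ whose chosen path $P_{u'v'}$ passes through $w$. Such a path is a walk in $H$ of length $j$ with $1 \le j \le k$ through $w$, and the number of walks of length $j$ through $w$ in $H$ is at most $(j+1)\Delta^{j}$: choose one of the $j+1$ positions of $w$ in the walk, then extend the walk in both directions with at most $\Delta$ options per step. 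A short computation then gives $\sum_{j=1}^{k}(j+1)\Delta^{j} \le 2(k+1)\Delta^{k}$ for $\Delta \ge 2$ (and the cases $\Delta \le 1$ are trivial).

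Combining both bounds, $\gamma_{uv}$ is involved in at most $(k-1)\cdot 2(k+1)\Delta^{k} \le 2k(k+1)\Delta^{k}$ crossings, which matches the claimed value of $\ell$. The main delicate point is the routing step: one must verify that the tubular neighborhoods and the vertex disks $D_w$ can be chosen coherently across all edges of $G$ so that every crossing is confined to some $D_w$ and each pair of curves contributes transverse crossings that are correctly counted by the walk-counting argument. Once the routing is in place, the local-crossing estimate reduces to the elementary counting above.
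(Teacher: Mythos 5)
Your overall strategy---fix a plane drawing of $H$, route each edge of $G$ along a path of length at most $k$ in $H$ inside a thin tubular neighbourhood, and count crossings locally in small disks around vertices of $H$---is exactly the routing argument behind the cited result (note that the paper does not reprove Lemma~\ref{lem:kp2fin}; it quotes it from \cite[Lemma 12]{DMW23} and sketches precisely this idea). The genuine gap is your assertion that no crossings occur in the disks around the endpoints $u,v$ of $\gamma_{uv}$. This cannot in general be arranged. Inside $D_u$, a curve whose routing path has $u$ as an \emph{internal} vertex must travel from one tube to another while avoiding the point representing $u$, whereas the curves ending at $u$ are arcs from the boundary of $D_u$ to its centre; these arcs cut $D_u$ minus the centre into angular sectors, and if the two tubes used by the passing curve are separated by ending curves on both sides, the passing curve is forced to cross one of them. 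Concretely, let $H$ be the star with centre $c$ and leaves $x_1,x_2,x_3,x_4$ in this cyclic order, $k=2$, $G=H^2=K_5$: in any drawing confined to your tubes and disks, the curve for $x_1x_3$ (routed through $c$) must cross the curve for $cx_2$ or for $cx_4$ inside $D_c$, i.e.\ at an endpoint of those edges, even though their routing paths have no internal vertices, so your accounting would assign them zero crossings. Thus the factor $k-1$ in your final estimate is not justified; with endpoint disks included, your per-disk bound $\sum_{j=1}^{k}(j+1)\Delta^{j}\le 2(k+1)\Delta^{k}$ over $k+1$ disks gives $2(k+1)^{2}\Delta^{k}$, which overshoots the stated $\ell=2k(k+1)\Delta^{k}$.

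The argument is salvageable with a sharper count: each edge of $G$ should be counted once, not once per orientation of its route. The number of directed walks of length $j$ in $H$ through a fixed vertex $w$ is at most $(j+1)\Delta^{j}$, so the number of edges of $G$ whose chosen path has length $j$ and passes through $w$ is at most $\tfrac12(j+1)\Delta^{j}$, whence at most $\tfrac12\sum_{j=1}^{k}(j+1)\Delta^{j}\le (k+1)\Delta^{k}$ such edges in total (for $\Delta\ge 2$; the case $\Delta\le 1$ is trivial). Allowing crossings in all $k+1$ disks along $P_{uv}$, with at most one crossing per pair of curves per disk, gives at most $(k+1)^{2}\Delta^{k}\le 2k(k+1)\Delta^{k}$ crossings on $\gamma_{uv}$ for $k\ge 1$, recovering the claimed bound. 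By contrast, the point you flag as delicate---choosing the tubes and disks coherently---is fine: keep the curves parallel with a fixed order inside each tube and resolve all necessary transpositions inside the vertex disks; the real issue was the endpoint disks.
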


However, we need a version of Lemma \ref{lem:kp2fin} for infinite locally finite
graphs. The first option is to simply follow the proof of \cite{DMW23}, which starts with a planar drawing of $G$, and adds for any path $P$
of length at most $k$, an edge between the endpoints of $P$, drawn in a
close neighborhood around $P$. However in the locally finite case this
approach requires that the original planar drawing has the property that every
edge has a small neighborhood which does not intersect any other
vertices or edges of the graph. Such a drawing always exists but it
requires a little bit of work. So instead, we chose to extend Lemma \ref{lem:kp2fin}  to infinite locally finite graphs using a simple compactness
argument.

\begin{lemma}\label{lem:compac}
 Let $G$ be a  locally finite graph.  If there is an integer $\ell$ such that all finite
induced subgraphs of $G$ are $\ell$-planar, then $G$ is also
$\ell$-planar.
\end{lemma}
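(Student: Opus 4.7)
The plan is to apply a standard compactness argument via König's lemma. I would first reduce to the case where $G$ is connected — otherwise I can draw each component separately in a disjoint bounded region of $\mathbb{R}^2$. Since $G$ is connected and locally finite, $V(G)$ is countable, so I fix an enumeration $V(G) = \{v_1, v_2, \ldots\}$ and let $G_n := G[\{v_1, \ldots, v_n\}]$, which is a finite induced subgraph and hence $\ell$-planar by hypothesis.

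Next I would encode $\ell$-planar drawings combinatorially. Any $\ell$-planar drawing of a finite graph $F$ is determined, up to homeomorphism of the plane, by its \emph{planarization} $\tilde F$ (the planar graph obtained by replacing every crossing point by a new degree-$4$ vertex) equipped with: a rotation system realizing its planar embedding, a marking of vertices as either ``real'' (those of $F$) or ``crossings'', and a partition of $E(\tilde F)$ into paths of length at most $\ell+1$ — one per edge of $F$, with internal vertices being crossings — such that the rotation at every crossing vertex alternates between the two paths passing through it. Since $F$ has finitely many edges, each involving at most $\ell$ crossings, the set of combinatorial equivalence classes of such drawings of $F$ is finite.

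König's lemma now takes over. I would form a rooted tree $T$ whose level-$n$ nodes are the combinatorial $\ell$-planar drawings of $G_n$; a level-$n$ node $D$ and a level-$(n+1)$ node $D'$ are joined by an edge whenever $D$ is the natural restriction of $D'$ obtained by deleting $v_{n+1}$ and all its incident edges (and smoothing away any crossing vertex that ceases to involve two distinct edges of the restricted graph). This restriction clearly preserves $\ell$-planarity, so every level of $T$ is non-empty; by the finiteness observed above, every level is finite. König's lemma then yields an infinite branch $D_1, D_2, \ldots$ of mutually compatible drawings, whose direct limit is a locally finite graph $\tilde G$ together with a rotation system, a vertex marking, and a partition of $E(\tilde G)$ into paths of length at most $\ell+1$ with the alternation property — that is, a combinatorial $\ell$-planar ``drawing'' of $G$.

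The hard part will be realizing this combinatorial object geometrically in $\mathbb{R}^2$. Every finite subgraph of $\tilde G$ is a subgraph of some planarization $\tilde G_n$ and is therefore planar; combined with local finiteness, this forces $\tilde G$ to admit a topological embedding in $\mathbb{R}^2$, for instance via the infinite-graph version of Kuratowski's theorem. To ensure that the embedding realizes the prescribed rotation system, I would use the standard limiting procedure on the space of drawings: embed each $\tilde G_n$ as in $D_n$ inside a fixed disk, then extract a convergent subdrawing via a diagonal (Tychonoff-style) argument, using that the $D_n$ are mutually compatible. Finally, the alternation at each crossing vertex of $\tilde G$ allows me to locally perturb the resulting planar drawing so that the crossing vertex is replaced by a transverse crossing of two arcs of $G$, yielding the desired $\ell$-planar drawing of $G$.
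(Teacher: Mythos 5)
Your proposal is correct and follows essentially the same route as the paper: encode each $\ell$-planar drawing of $G_n=G[\{v_1,\dots,v_n\}]$ by the finite combinatorial data of its planarization (rotation system plus crossing structure), apply K\"onig's lemma to the tree of these drawings ordered by vertex deletion, and assemble the resulting compatible sequence into a drawing of $G$. The only difference is presentational: the paper simply takes the union of the nested embeddings along the infinite branch, so the extra ``Tychonoff-style'' extraction in your last paragraph is not needed---the compatibility already supplied by the branch lets one realize the drawings geometrically one vertex at a time and take their union.
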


\begin{proof}
We first observe
that any $\ell$-planar embedding of a graph $H$ can be described combinatorially, by considering the
planar graph $H^+$ obtained from $H$ by replacing all crossings by new
vertices. The corresponding planar embedding of $H^+$ can be completely
described (up to homeomorphism) by its rotation system (the clockwise
cyclic ordering of the neighbors around each vertex), and there are
only finitely many such rotation systems if $H$ (and thus $H^+$) is
finite.

\smallskip

We are now ready to prove the lemma. We can assume that $G$ is connected, since otherwise we can consider
each connected component independently. Since $G$ is locally finite and
connected, it is countable and we can write
$V(G)=\{v_1,v_2,\ldots\}$. We define a rooted tree $T$ as follows. The
root of $T$ is the unique $\ell$-planar embedding of $G[\{v_1\}]$, up to
homeomorphism. For every $k\ge 1$, and any $\ell$-planar embedding of
$G_k:=G[\{v_1,\ldots,v_k\}]$ we add a node in the tree and connect it to
the node corresponding to the resulting $\ell$-planar embedding of
$G_{k-1}=G_k-v_k$ (the embedding obtained by deleting
$v_k$ in the embedding of $G_k$). The resulting tree $T$ is infinite
(since every graph $G_k$ is $\ell$-planar by
assumption), and locally finite (since every graph
$G_k$ has only finitely many different $\ell$-planar
embeddings, up to homeomorphism). By K\"onig's infinity lemma
\cite{Kon27} (or by repeated applications of the pigeonhole
principle), $T$ has an infinite path starting at the root. This
infinite path corresponds to a sequence of $\ell$-planar embeddings of
$G_k$, $k\ge 0$, with the property that for every $k\ge 0$, the
$\ell$-planar embedding of $G_k$ can be obtained from the
$\ell$-planar embedding of $G_{k+1}$ by deleting $v_{k+1}$ (and all
edges incident to $v_{k+1}$). By taking the union of all the
$\ell$-planar embeddings of $G_k$, $k\ge 0$, we thus obtain an
$\ell$-planar embedding of $G$, as desired.
\end{proof}

We obtain the following as a direct consequence.

\begin{corollary}
\label{lem:kp2}
 Let $H$ be a locally finite planar graph of maximum degree at most $\Delta$ and let
 $G$ be a subgraph of $H^k$, for some integer $k$. Then $G$ is
 $\ell$-planar, for $\ell:=2k(k+1)\Delta^k$.
\end{corollary}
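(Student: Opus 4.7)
The plan is to deduce the infinite statement from the two preceding ingredients: the finite case (Lemma \ref{lem:kp2fin}) and the compactness principle (Lemma \ref{lem:compac}). By Lemma \ref{lem:compac}, it suffices to prove that every finite induced subgraph of $G$ is $\ell$-planar, where $\ell=2k(k+1)\Delta^k$ depends only on $k$ and $\Delta$ (and not on $|V(H)|$).

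To this end, fix a finite induced subgraph $G_0$ of $G$. I would construct an auxiliary finite planar graph $H_0 \subseteq H$ as follows. For every edge $uv\in E(G_0)$, since $G_0\subseteq G\subseteq H^k$, there exists a $uv$-path $P_{uv}$ in $H$ of length at most $k$. Let $H_0$ be the union of all the paths $P_{uv}$, over all edges $uv\in E(G_0)$. Then $H_0$ is a finite subgraph of $H$, and hence is planar with maximum degree at most $\Delta$. Moreover, by construction, every edge $uv\in E(G_0)$ corresponds to a $uv$-path of length at most $k$ contained in $H_0$, so $G_0$ is a subgraph of $H_0^k$.

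Applying Lemma \ref{lem:kp2fin} to the finite planar graph $H_0$ (of maximum degree at most $\Delta$) and its power $H_0^k$ which contains $G_0$, we obtain that $G_0$ is $\ell$-planar, where $\ell=2k(k+1)\Delta^k$ depends only on $k$ and $\Delta$. Since this holds for every finite induced subgraph $G_0$ of $G$, Lemma \ref{lem:compac} concludes that $G$ itself is $\ell$-planar, as required.

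There is no real obstacle here: the argument is a textbook ``finite-to-infinite'' transfer, and the only subtlety is ensuring that $\ell$ is a universal constant independent of the choice of finite subgraph. This is guaranteed by the fact that the bound in Lemma \ref{lem:kp2fin} involves only $k$ and $\Delta$, both of which are uniform across all the finite subgraphs $H_0$ we extract from $H$.
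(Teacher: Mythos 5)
Your argument is essentially the same as the paper's: extract a finite planar subgraph of $H$ by taking the union of short paths witnessing the edges (resp.\ pairs at distance at most $k$) of a finite piece of $G$, apply Lemma \ref{lem:kp2fin} to it, and conclude by the compactness statement of Lemma \ref{lem:compac}. The only microscopic difference is that you should also add the vertices of $G_0$ themselves to $H_0$ (as the paper does with the set $Y\supseteq X$) so that vertices of $G_0$ isolated in $G_0$ are not lost; this changes nothing in the argument.
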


\begin{proof}
Let $X$ be a finite
subset of $V(G)\subseteq V(H)$ and for any pair $x,x'\in X$ with
$d_H(x,x')\le k$, consider a path $P_{x,x'}$ of length at most $k$
between $x$ and $x'$ in $H$.  Let $Y$ be the union of $X$ and the
vertex sets of all the paths $P_{x,x'}$ defined above. Then $H[Y]$ is
a finite planar graph, and $G[X]$, the finite subgraph of $G$ induced by $X$,
is a subgraph of $H[Y]^k$. By Lemma \ref{lem:kp2fin}, $G[X]$ is
$\ell$-planar with $\ell:=2k(k+1)\Delta^k$. Since this holds for any
finite set $X$, it follows from Lemma \ref{lem:compac} that $G$ itself
is $\ell$-planar, as desired.
\end{proof}

Theorem \ref{thm:kp} is now a direct consequence of Corollary \ref{cor:pqi} and Corollary \ref{lem:kp2}. 
By combining Theorems \ref{thm:qp} and \ref{thm:kp}, we then immediately
deduce Theorem \ref{thm:kpg}.


\section{Assouad-Nagata dimension of
  minor-excluded groups}\label{sec:AN}

For two graphs $G$ and $H$, we say that a function $f:
V(G)\to V(H)$ is a \emph{bilipschitz} mapping from $G$ to $H$ if there are
constants $c_1,c_2>0$ such that for any $x,y\in V(G)$, \[c_1\cdot d_G(x,y)\le
  d_H(f(x),f(y))\le c_2 \cdot d_G(x,y).\]

When such a function $f$ exists we also say that the graph $G$ has a \emph{bilipschitz
  embedding in $H$}.
Since graphs are uniformly discrete metric spaces (in the sense that any two distinct elements lie at distance at least 1 apart), for any quasi-isometric
embedding of a bounded degree graph $G$ in a graph $H$, there is a
bilipschitz embedding of $G$ in a graph obtained from $H$ by adding,
for each vertex $v$ of $H$, a bounded number of vertices of degree 1
adjacent to $v$ (note that in the journal version of the paper \cite{EG24}, it is
incorrectly stated that with the same assumptions there is a
bilipschitz embedding of $G$ in $H$).
Hence, Theorem \ref{thm:qp} has the following immediate consequence.

\begin{corollary}\label{cor:qp}
 Every locally finite quasi-transitive $K_{\infty}$-minor free
 graph has a bilipschitz embedding  in a planar graph of bounded degree.
\end{corollary}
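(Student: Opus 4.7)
The plan is to read off the corollary from Theorem~\ref{thm:qp} together with the general principle, stated informally in the paragraph before the corollary, that a quasi-isometric embedding between graphs is automatically bilipschitz. First I would invoke Theorem~\ref{thm:qp} to obtain a quasi-isometry $f \colon V(G) \to V(G')$ where $G'$ is a planar graph of bounded degree, with some constants $\lambda \geq 1$ and $\varepsilon \geq 0$ satisfying the usual two-sided estimate. I would then use this same $f$ as the candidate bilipschitz embedding.

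The upper bilipschitz bound $d_{G'}(f(x),f(y)) \leq c_2 \cdot d_G(x,y)$ is immediate: whenever $x \neq y$ one has $d_G(x,y) \geq 1$, so the additive error $\varepsilon$ in $\lambda d_G(x,y) + \varepsilon$ can be absorbed into the multiplicative constant, giving $c_2 := \lambda + \varepsilon$. For the lower bound, I would split into two regimes depending on $d_G(x,y)$. When $d_G(x,y) \geq 2\lambda\varepsilon$, the quantity $\tfrac{1}{\lambda} d_G(x,y) - \varepsilon$ is already at least $\tfrac{1}{2\lambda}d_G(x,y)$, so there is nothing to do. When $1 \leq d_G(x,y) < 2\lambda\varepsilon$, the injectivity of $f$ forces $d_{G'}(f(x),f(y)) \geq 1$, which is itself at least $\tfrac{1}{2\lambda\varepsilon}d_G(x,y)$ (the boundary case $\varepsilon = 0$ is trivial). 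Taking $c_1$ to be the minimum of the two constants yields the desired lower bilipschitz estimate.

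The only subtle point in this deduction is injectivity: a bilipschitz map is necessarily injective (whenever $c_1 > 0$ and $x \neq y$, the inequality $c_1 d_G(x,y) \leq d_{G'}(f(x),f(y))$ rules out $f(x) = f(y)$), whereas a general quasi-isometry need not be. I would therefore rely on the explicit form of $f$ built in the proof of Theorem~\ref{thm:qp}, which sends each $u \in V(G)$ to a single vertex $u^{(t_u)}$ inside the disjoint union $\biguplus_{t\in V(T)} V'_t$, and is therefore injective by construction. This is the only step requiring any thought, and it is not a real obstacle; the corollary is essentially an unpacking of the uniform discreteness of graphs against the definition of a quasi-isometry.
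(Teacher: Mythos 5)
Your proof is correct and takes essentially the same route as the paper, which deduces the corollary directly from Theorem~\ref{thm:qp} together with the uniform discreteness of graphs (any two distinct vertices are at distance at least $1$). Your attention to injectivity is a sound refinement rather than a detour: the paper's one-line claim that every quasi-isometric embedding between graphs is bilipschitz tacitly requires the map not to identify distinct vertices, and, as you observe, the explicit map $u\mapsto u^{(t_u)}$ constructed in the proof of Theorem~\ref{thm:qp} is injective, so the deduction goes through exactly as you describe.
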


It was proved in \cite{BBEGLPS,JL22} that planar graphs have
Assouad-Nagata dimension at most 2. The proofs are similar and both
quite short: they
combine an original argument of Fujiwara and Papasoglu \cite{FP21}, proving that planar graphs have
Assouad-Nagata dimension at most 3, with a dimension reduction
technique introduced by Brodskiy, Dydak, Levin and Mitra
\cite{BDLM08}. Since Assouad-Nagata dimension is
invariant under bilipschitz embedding \cite{LS05}, we obtain the
following as a direct application.

\begin{corollary}
Every locally finite  quasi-transitive $K_\infty$-minor free
graph has Assouad-Nagata dimension at most 2.
\end{corollary}

Recall that
a stronger version (without the quasi-transitivity assumption) was proved
in \cite{Liu23,Dis23}, but the version above has a reasonably simple
proof that does not rely on the Robertson-Seymour graph minor
structure theorem.

\medskip

\subsection*{Important note} In the journal version of the paper
\cite{EG24}  we gave a simple proof that planar graphs of
bounded degree have Assouad-Nagata dimension at most 2. However our
argument was incorrect, as we were incorrectly using \cite[Theorem 4.3]{BBEGLPS}. More precisely, we stated in \cite[Theorem 5.4]{EG24} that whenever any graph of a
class $\mathcal{C}$ has a layering
such that any number of consecutive layers induces a class of 
Assouad-Nagata dimension at most $n$, then the class $\mathcal{C}$ has
Assouad-Nagata dimension at most $n+1$. This is false, as graphs of
bounded layered treewidth show (these graphs have unbounded
Assouad-Nagata dimension \cite{BBEGLPS}, while any bounded number of
layers induce a graph of bounded treewidth, and such graphs have Assouad-Nagata
dimension at most 1 \cite{BBEGLPS}). Indeed, to apply \cite[Theorem
4.3]{BBEGLPS}, it is
required that in addition, the $n$-dimensional control function is also
linear in the number of layers (which is not the case for graphs of
bounded layered treewidth).

We would like to apologize to everyone
for the inconvenience, and express our gratitude to
Robert Hickingbotham, who noticed the issue.


\section{Open problems}\label{sec:ccl}

We now discuss possible extensions or variants of Theorem
\ref{thm:qp}.

\subsection*{Quasi-isometries to quasi-transitive planar graphs}

It was proved by MacManus~\cite{Mac23} that if a finitely generated group has a
Cayley graph which is quasi-isometric to a planar graph, then it is
quasi-isometric to a planar Cayley graph.  In the same spirit, it is
natural to ask whether we can require the planar graph of bounded
degree in Theorem \ref{thm:qp} to be quasi-transitive, or even a
Cayley graph. Our current proof does not preserve symmetries, as we do a
number of non-canonical choices for the images of the vertices. As
remarked by a referee, the
stronger question above, whether we can require the planar graph in
Theorem \ref{thm:qp} to be a Cayley graph, is a special case of a
Problem of Woess \cite[Problem 1]{Woe91}, which asked whether every
transitive graph is quasi-isometric to a Cayley graph. This turned out to have a
negative answer \cite{EFW12} in general, but the question restricted
to (quasi-)transitive graphs excluding a minor might still have a
positive answer. 

\subsection*{A finite list of obstructions}

MacManus \cite{Mac23} recently gave a precise characterization of quasi-transitive
graphs which are quasi-isometric to a planar graph, in terms of the
existence of a canonical tree-decomposition sharing similarities with
that of Theorem \ref{thm: mainCTTD}. It would be interesting to also find a
characterization in terms of obstructions. For instance, it was
conjectured in \cite{GP23} that a graph is quasi-isometric to a planar
graph if and only if it does not contain $K_5$ or $K_{3,3}$ as an
asymptotic minor (see also Question \ref{q1} for a version of this
problem in the particular case of 
transitive graphs).

\smallskip

 Examples of quasi-transitive graphs that are not quasi-isometric
    to any planar graph include Cayley graphs of a group of Assouad-Nagata
    dimension at least 3, for instance any grid in
    dimension 3. This rules out any generalization of Theorem
    \ref{thm:qp} using classes of polynomial growth or expansion. This example also shows that we cannot extend Theorem
    \ref{thm:qp} to all families of bounded queue-number or stack-number.

\smallskip
    
    Here is
    perhaps a more interesting example. The \emph{strong product} $G\boxtimes H$ of two graphs $G$ and $H$
    has vertex set $V(G)\times V(H)$, and two distinct vertices $(u,x)$ and
    $(v,y)$ are adjacent if and only if ($u=v$ or $uv\in E(G)$) and ($x=y$ or
    $xy\in E(H)$).
Consider the strong product $T\boxtimes P$
    of the infinite binary tree $T$ and the infinite 2-way path
    $P$. This graph has
    Assouad-Nagata dimension 2. On the other hand, we observe that for
    any integer $k$, $T\boxtimes P$ contains the complete bipartite
    graph $K_{k,k}$ as an asymptotic minor. To see this, remark that
    $T$ contains an infinite $k$-claw (the graph obtained by gluing
    $k$ infinite 1-way paths at their starting vertex) as an asymptotic minor (obtained by
    contracting a
    subtree of $T$ with $k$ leaves into a single vertex, and pruning
    the additional branches). The strong product of this infinite
    $k$-claw with $P$ consists of $k$ copies of an infinite
    2-dimensional grid (restricted to the upper half-plane, say),
    glued on a common infinite path $\pi$. On this path we can select $k$
    vertices, arbitrarily far apart, and on each infinite grid we can
    select a single vertex, arbitrarily far from $\pi$, and connect it
    to the $k$ vertices of $\pi$ via disjoint paths. By taking a ball
    of sufficiently large radius around each of the $2k$ vertices, we
    obtain $K_{k,k}$ as an $r$-fat minor for arbitrarily large $r$,
    and thus $K_{k,k}$ as an asymptotic minor.  This is
    illustrated for $k=3$ in Figure \ref{fig:claw}. Since containing a
    graph $H$ as an asymptotic minor is invariant under
    quasi-isometry, any graph $G$ which is quasi-isometric to $T\boxtimes
    P$ also contains every $K_{k,k}$ as an asymptotic minor, and thus
    every finite graph  as a minor (in particular, $G$ cannot be
    planar).

    \begin{figure}[htb]
  \centering
  \includegraphics[scale=0.85]{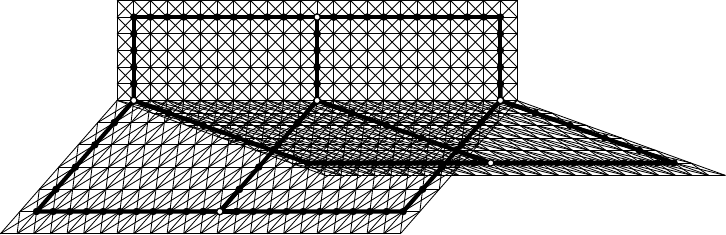}
  \caption{A fat $K_{3,3}$-minor in $T\boxtimes P$.}
  \label{fig:claw}
\end{figure}

    \smallskip

Recall that any graph excluding a minor has Assouad-Nagata dimension at
most 2 \cite{Liu23,Dis23}. It is natural to wonder whether some sort of
converse holds, that is whether any graph of Assoud-Nagata dimension
at most 2 is quasi-isometric to a graph excluding a minor (this would
be a natural extension of  Theorem \ref{thm:qp}). The example
above shows that this is false, even for vertex-transitive graphs.

\smallskip

As explained above, it was
conjectured in \cite{GP23} that graphs that are quasi-isometric to a planar graph
can be characterized by a finite list of forbidden asymptotic minors. A natural
question is whether this can be replaced by a finite list of forbidden
quasi-isometric graphs, at least in the case of quasi-transitive
graphs. We do not have a good candidate for such a finite list, but it
should contain at least the two examples mentioned above:
3-dimensional grids and the product of the binary tree with a path. One
difficulty is that no such list is even known (or conjectured to
exist) for Assouad-Nagata dimension at most 2, or asymptotic dimension
at most 2.

\subsection*{$k$-planar graphs}

We conjecture the following variant of Theorem \ref{thm:qp} for
$k$-planar graphs.

\begin{conjecture}\label{qikp}
Every quasi-transitive graph of bounded local crossing number and
degree is quasi-isometric to a planar graph.
\end{conjecture}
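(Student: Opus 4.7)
A natural candidate for a planar graph quasi-isometric to $G$ is the \emph{planarization} $G^+$ obtained from a $k$-planar drawing of $G$ by replacing every crossing point by a new vertex of degree $4$. Then $G^+$ is a locally finite planar graph of maximum degree at most $\max(\Delta,4)$, where $\Delta$ denotes the maximum degree of $G$, and every vertex of $G^+$ lies within distance $\lfloor (k+1)/2\rfloor$ of $V(G)\subseteq V(G^+)$. Since each edge of $G$ corresponds to a path of length at most $k+1$ in $G^+$, the inclusion satisfies $d_{G^+}(u,v)\le (k+1)\,d_G(u,v)$ for all $u,v\in V(G)$, so the whole problem reduces to establishing a converse inequality of the form $d_G(u,v)\le C\cdot d_{G^+}(u,v)$.

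To this end, consider a shortest $uv$-path $P$ in $G^+$ with $u,v\in V(G)$ and cut it at its visits to $V(G)$ into sub-paths whose interiors consist only of crossing vertices. Each such sub-path, say from $x$ to $y$ and of length $\ell$, traces through some sequence $e_1,\ldots,e_t$ of edges of $G$, with consecutive edges $e_j,e_{j+1}$ sharing the crossing at which the path switches from one to the other; moreover $t\le \ell$, $x$ is an endpoint of $e_1$, and $y$ is an endpoint of $e_t$. Let $D$ denote the maximum $G$-distance between an endpoint of $e$ and an endpoint of $f$ over all pairs of edges $e,f$ that cross in the drawing. Hopping from $x$ to an endpoint of $e_2$, then to an endpoint of $e_3$, and so on until $y$ (each step costing at most $D$ in $G$) yields $d_G(x,y)\le tD\le \ell D$ on each sub-path; summing over all sub-paths, one obtains $d_G(u,v)\le D\cdot d_{G^+}(u,v)$. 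It therefore suffices to produce a $k$-planar drawing of $G$ for which $D$ is finite.

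The main obstacle is precisely to produce such a drawing: for an arbitrary $k$-planar drawing, two crossing edges may have endpoints arbitrarily far apart in $G$. A natural strategy is to exploit quasi-transitivity and look for an \emph{equivariant} $k$-planar drawing, on which (a finite-index subgroup of) $\Aut(G)$ acts by self-homeomorphisms of the plane; the pairs of crossing edges would then fall into finitely many orbits, and $D$ would be finite. However, no analogue of Whitney's theorem is known for $k$-planar graphs, and we are not aware of a direct way to produce such a symmetric drawing. A promising avenue is to establish an analogue of Theorem \ref{thm: mainCTTD} for locally finite quasi-transitive $k$-planar graphs of bounded degree, yielding a canonical tree-decomposition of bounded adhesion whose torsos are either finite or admit a canonical $k'$-planar drawing for some $k'\ge k$ (for instance because they are $3$-connected); one could then assemble an equivariant drawing of $G$ torso by torso and conclude as in the proof of Theorem \ref{thm:qp}.
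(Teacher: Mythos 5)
The statement you are trying to prove is stated in the paper as an open conjecture (Conjecture \ref{qikp}); the paper offers no proof of it, only reductions. Your proposal, by your own admission in its last paragraph, does not prove it either: the entire argument hinges on producing a $k$-planar drawing of $G$ in which any two crossing edges have endpoints at bounded $G$-distance $D$, and you explicitly leave that step open. This is not a minor technicality that could be patched; it is exactly the content of the problem. Indeed, your reduction is essentially the one the paper itself records: the paper observes that Conjecture \ref{qikp} follows from Conjecture \ref{qikp1} (the 1-planar case, via taking the $(k-1)$-subdivision), and that Conjecture \ref{qikp1} would in turn follow from the conjecture that in some drawing with at most one crossing per edge there is an integer $k$ with $d_G(u,x)\le k$ for every pair of crossing edges $uv,xy$. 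Your planarization argument (replace crossings by degree-4 vertices, bound $d_{G^+}$ above by $(k+1)d_G$ and $d_G$ above by $D\cdot d_{G^+}$ by hopping along endpoints of consecutive crossing edges) is a correct, if slightly constant-sloppy, proof of that implication, but it leaves the conjecture itself untouched: for an arbitrary $k$-planar drawing, $D$ can be infinite, and neither quasi-transitivity of $G$ nor bounded degree is known to yield an equivariant or otherwise ``metrically tame'' drawing, precisely because no analogue of Whitney's uniqueness theorem is available beyond planarity.

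So the honest assessment is: you have reformulated the conjecture, not proved it. What would be needed is either (a) a proof of the bounded-crossing-distance conjecture for some drawing (which is where the paper also stops), or (b) a structural route avoiding drawings altogether, e.g.\ a canonical tree-decomposition theorem in the spirit of Theorem \ref{thm: mainCTTD} for quasi-transitive $k$-planar graphs of bounded degree, whose torsos are finite or admit well-behaved embeddings, followed by an assembly argument as in the proof of Theorem \ref{thm:qp}. Your closing paragraph correctly identifies (b) as a plausible strategy, but no such decomposition theorem is established in your proposal or in the paper, so the gap remains. One further small caution about your second paragraph: when you hop from an endpoint of $e_j$ to an endpoint of $e_{j+1}$ you should also account for moves between the two endpoints of a single edge (cost $1$) and for the final hop to $y$, so the clean bound is of the form $d_G(x,y)\le (D+1)(t+1)$ rather than $tD$; this does not affect the reduction, since any linear bound suffices for a quasi-isometry.
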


Using Theorem \ref{thm:kp}, Conjecture \ref{qikp} would imply
      a positive answer to Problem \ref{p1}.
We observe that it is enough to prove Conjecture \ref{qikp} for 1-planar
graphs.

\begin{conjecture}\label{qikp1}
Every quasi-transitive 1-planar graph of bounded
degree is quasi-isometric to a planar graph.
\end{conjecture}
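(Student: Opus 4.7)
The plan is to show that a suitably chosen planarization of $G$ is quasi-isometric to $G$. Fix a 1-planar drawing $\mathcal{D}$ of $G$ and let $G^+$ be the planar graph obtained by adding a new vertex at each crossing of $\mathcal{D}$ and subdividing the two crossing edges. Every new vertex has degree exactly $4$, so $G^+$ has maximum degree $\max(\Delta,4)$ (where $\Delta$ bounds the degree of $G$), and the inclusion $\iota\colon V(G)\to V(G^+)$ has $1$-cobounded image. Since each edge of $G$ becomes a path of length at most $2$ in $G^+$, we automatically have $d_{G^+}(\iota(u),\iota(v))\le 2\, d_G(u,v)$ for all $u,v\in V(G)$, so the whole proof reduces to a matching lower bound $d_G(u,v) \le C\cdot d_{G^+}(\iota(u),\iota(v))$.

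A shortest $\iota(u)\iota(v)$-path $P$ in $G^+$ lifts to a walk of the same length in $G$ except possibly at crossing vertices: if $x$ is the subdivision point of two crossing edges $ab,cd\in E(G)$, then $P$ may enter $x$ along one of these edges and leave along the other, creating a shortcut of length $2$ in $G^+$ between two endpoints of distinct original edges (say between $a$ and $d$) that may a priori be arbitrarily far apart in $G$. The lower bound thus reduces to the following key claim: \emph{there is a 1-planar drawing $\mathcal{D}$ of $G$ and a constant $C_0$ such that, for every crossing $\{ab,cd\}$ of $\mathcal{D}$, the four vertices $a,b,c,d$ lie pairwise within $G$-distance $C_0$.} Granted this, each ``switch'' of $P$ at a crossing vertex can be replaced by a $G$-path of length at most $C_0$, yielding $d_G(u,v) \le C_0\cdot d_{G^+}(\iota(u),\iota(v))$.

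The main obstacle is producing such a locally bounded drawing, and this is where quasi-transitivity must enter. Our proposal is to construct a drawing $\mathcal{D}$ whose crossings, viewed as unordered $4$-subsets of $V(G)$, split into finitely many $\Aut(G)$-orbits; quasi-transitivity then immediately forces each orbit to have bounded diameter in $G$, and $C_0$ follows. Since 1-planar graphs can contain arbitrarily large complete bipartite minors, Theorem~\ref{thm: mainCTTD} cannot be invoked here, and we expect the real content of Conjecture~\ref{qikp1} to be a structural theorem specific to quasi-transitive 1-planar graphs: for instance, a canonical tree-decomposition whose torsos are finite, planar, or carry a natural 1-planar drawing in which every crossing is ``certified'' by a short cycle of $G$. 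A more concrete alternative is to start from an arbitrary 1-planar drawing and iteratively reroute any crossing whose endpoints are far apart in $G$, using the bounded-degree hypothesis to control the process; isolating a monotone progress measure that terminates under quasi-transitivity would be the combinatorial heart of this approach.
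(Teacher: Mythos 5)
This statement is posed in the paper as an open conjecture, and the paper offers no proof of it; so your proposal should be judged as an attempt at new mathematics, not compared against an existing argument. Your reduction step is correct and is, in fact, exactly the reduction the paper itself records: planarize a 1-planar drawing by placing a degree-4 vertex at each crossing, note that $d_{G^+}(\iota(u),\iota(v))\le 2\,d_G(u,v)$ and that the image is $1$-cobounded, and observe that the only obstruction to the reverse inequality is a ``switch'' at a crossing vertex, which is harmless as soon as the four endpoints of any pair of crossing edges lie within bounded $G$-distance of each other. The paper states precisely this last property as a further conjecture (for quasi-transitive bounded-degree graphs drawn with at most one crossing per edge, there is a $k$ such that $d_G(u,x)\le k$ for every pair of crossing edges $uv,xy$) and notes that Conjecture~\ref{qikp1} would follow from it. So your ``key claim'' is not a lemma you have proved; it is the entire open content of the problem, and your write-up honestly says as much.

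The genuine gap is therefore the key claim itself, and the route you sketch for it does not yet close it. Quasi-transitivity gives a group action on $G$, but an arbitrary 1-planar drawing carries no equivariance whatsoever: there is no reason the crossings of a given drawing form finitely many $\Aut(G)$-orbits of $4$-sets, and producing a ``canonical'' or orbit-finite drawing is not a routine strengthening --- it is an analogue, for 1-planar drawings, of the canonical tree-decomposition machinery that drives Theorem~\ref{thm:qp}, and no such tool is currently available (as you correctly note, Theorem~\ref{thm: mainCTTD} is useless here since quasi-transitive 1-planar graphs may contain every finite graph as a minor). Likewise, the rerouting idea lacks the crucial ingredient: rerouting one bad crossing can create new crossings elsewhere (and may violate 1-planarity), and neither bounded degree nor quasi-transitivity by itself suggests a monotone progress measure. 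In short: the easy half of your argument (bounded crossing spread implies quasi-isometry to the planarization) is sound and coincides with the paper's remark, but the statement remains a conjecture because the hard half --- bounded crossing spread, or any equivariant/locally controlled 1-planar drawing --- is exactly what nobody knows how to prove.
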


To see that the case $\lcr\geq 2$ reduces to the case $\lcr=1$,
observe that
        for every $k$-planar graph $G$ with $k\geq 2$, its
        $(k-1)$-subdivision $G^{(k-1)}$ (the graph obtained from $G$
        by replacing every edge by a path on $k$ edges) is locally finite,
        quasi-transitive, quasi-isometric to $G$, and 1-planar. To see the last point,
        consider any embedding of $G$ in $\mathbb{R}^2$ in which every
        edge is involved in at most  $k$ crossings  and assume without loss
        of generality that the crossing points between every two edges
        are all pairwise distinct. Then for every edge $e\in E(G)$,
        one can add the $k-1$ corresponding vertices of $G^{(k-1)}$
        subdividing $e$ in the drawing by putting at least one vertex
        on each of the curves connecting two consecutive crossing
        points of $e$ with other edges.

        \smallskip

      We note that Conjecture \ref{qikp1} (and thus Conjecture
      \ref{qikp}) would be a direct consequence of the following.

      \begin{conjecture}
Let $G$ be a  quasi-transitive  1-planar graph of bounded
degree. Then there is an integer $k$ and an embedding
of $G$ in the plane with at most 1 crossing per edge such that for every pair of  crossing edges
$uv, xy$ in $G$, we have $d_G(u,x)\le k$.
\end{conjecture}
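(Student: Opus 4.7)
The plan is to argue by contradiction, exploiting the finiteness of edge orbits in $G$. Assume there is a sequence of crossing pairs $(u_n v_n, x_n y_n)$ with $d_G(u_n, x_n) \to \infty$. Because $G$ is quasi-transitive and locally finite, $\Aut(G)$ has finitely many orbits on $E(G)$; by passing to a subsequence, we may suppose that all $u_n v_n$ lie in one common orbit. Fix a representative $u_0 v_0$ of this orbit and, for each $n$, choose an automorphism $\gamma_n$ with $\gamma_n(\{u_0, v_0\}) = \{u_n, v_n\}$; refining the subsequence once more, we may assume $\gamma_n(u_0) = u_n$. Set $e_n' := \gamma_n^{-1}(x_n y_n)$: these are edges of $G$ whose endpoints satisfy $d_G(u_0, \gamma_n^{-1}(x_n)) = d_G(u_n, x_n) \to \infty$, so $(e_n')_{n \ge 1}$ is an infinite family of pairwise distinct edges with endpoints arbitrarily far from $u_0$.

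The key observation is that the \emph{pullback} $\phi_n := \phi \circ \gamma_n$ is itself a $1$-planar embedding of $G$, in which the edge $u_0 v_0$ is drawn along the curve $\phi(u_n v_n)$ and is crossed by $e_n'$, drawn along $\phi(x_n y_n)$. If the given embedding $\phi$ were \emph{equivariant}, meaning that every $\gamma \in \Aut(G)$ extended to a self-homeomorphism of the plane mapping $\phi$ to itself, then the embeddings $\phi_n$ would all coincide with $\phi$ up to plane-homeomorphism, and the single edge $u_0 v_0$ would admit infinitely many pairwise distinct crossing partners $e_n'$ in $\phi$. This directly contradicts 1-planarity and closes the argument.

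The main obstacle is therefore to establish a suitable rigidity statement for $1$-planar embeddings of quasi-transitive bounded-degree graphs, namely that, up to plane-homeomorphism and the $\Aut(G)$-action, there are only finitely many $1$-planar embeddings of $G$. Any such finiteness would let us pass to a further subsequence on which the $\phi_n$ are all equivalent, reviving the contradiction above. I would approach this by first decomposing $G$ along bounded-size separators in the spirit of Theorem~\ref{thm: mainCTTD}, reducing to sufficiently connected pieces, and then adapting Whitney's uniqueness theorem for $3$-connected planar graphs to the $1$-planar setting, via a combinatorial analysis of the rotation system of the planarization $G^+$ (the planar bounded-degree graph obtained by placing a new vertex at each crossing).

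The hardest step will be proving this rigidity: it is already delicate for finite $1$-planar graphs and is closely tied to the companion question of whether every quasi-transitive $1$-planar graph admits an $\Aut(G)$-invariant $1$-planar embedding. A positive answer to either formulation, combined with Theorem~\ref{thm:kp}, would yield Conjecture~\ref{qikp1} and settle Problem~\ref{p1} affirmatively.
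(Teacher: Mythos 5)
The statement you are trying to prove is not proved in the paper at all: it is stated as an open conjecture in Section~\ref{sec:ccl}, offered precisely because its truth would imply Conjecture~\ref{qikp1} and hence, via Theorem~\ref{thm:kp}, resolve Problem~\ref{p1}. So there is no proof in the paper to match, and your proposal does not supply one either: the contradiction you derive is explicitly conditional on a rigidity statement (only finitely many $1$-planar embeddings of $G$ up to plane homeomorphism and the $\Aut(G)$-action, or an $\Aut(G)$-equivariant $1$-planar embedding) that you do not prove and only sketch a strategy for. That deferred step is the entire content of the problem. The hypothesis gives you one arbitrary drawing $\phi$ with at most one crossing per edge; the pullbacks $\phi\circ\gamma_n$ are indeed $1$-planar drawings, but without equivariance they carry no information about the crossing pairs of $\phi$ itself, so the argument up to that point is a correct but routine reduction, not progress on the conjecture.

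Two further points about the deferred step. First, even the weaker finiteness statement you propose (finitely many embeddings up to homeomorphism \emph{and} the $\Aut(G)$-action) does not ``revive the contradiction'' as claimed: after passing to a subsequence on which the $\phi_n$ are pairwise equivalent, the equivalences are allowed to compose with automorphisms, which may move the edge $u_0v_0$ to other edges in its orbit; one then only concludes that $\phi$ has crossing pairs whose endpoints are arbitrarily far apart, which is exactly the negation you assumed, not a contradiction. What your argument really needs is that each $\gamma_n$ extends to a self-homeomorphism of the plane preserving the drawing (or at least that the crossing-partner map is $\Aut(G)$-equivariant), i.e.\ the existence of an invariant drawing --- which is essentially the companion open question you mention, and is closely related to the paper's remark that even its own construction in Theorem~\ref{thm:qp} involves non-canonical choices. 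Second, the rigidity route via Whitney's theorem is unlikely to go through as described: Whitney-type uniqueness concerns planar embeddings of $3$-connected graphs, whereas a graph can admit many inequivalent $1$-planar drawings with genuinely different sets of crossing pairs (the planarization $G^+$ depends on the drawing, so analysing its rotation system presupposes the drawing you are trying to pin down). As it stands, your proposal reformulates the conjecture rather than proving it.
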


In a previous version of this manuscript we were conjecturing
something stronger, namely that for any embedding of $G$ with at most 1
crossing per edge, there is an integer $k$ such that all pairs of
crossing edges lie at distance at most $k$ in $G$. But this is false (as
shown by the two-way infinite path, drawn in such a way that it
self-intersects at more and more distant points).

\medskip

In this paper we have mainly considered graphs with finite local
crossing number. A natural generalization is the following: a graph is
\emph{$(<\omega)$-planar} if it has a drawing in
the plane in which each edge is involved in finitely many
crossings. This raises the following question.

\begin{question}\label{q:<w}
Let $G$ be a quasi-transitive graph of bounded degree which is
$(<\omega)$-planar. Is it true that $G$ has finite local crossing number?
\end{question}

It was observed by Kolja Knauer (personal communication) that Question
\ref{q:<w} has a negative answer, as indeed \emph{any} infinite
locally finite graph $G$
is $(<\omega)$-planar. To see this, consider an
ordering $v_1,v_2,\ldots$ of $V(G)$, map each vertex $v_i$ to the
point with coordinates $(i,i^2)$ in the
plane, and each edge $v_iv_j$ as a segment joining $v_i$ and
$v_j$. Note that by convexity of the function $x\mapsto x^2$, every edge crossing an edge $v_iv_j$ must have an endpoint
$v_k$ with $i< k < j$. As for every pair $i<j$ there are only finitely many such vertices
$v_k$ and each of them has finite degree, the edge $v_iv_j$ is crossed
by only finitely many other edges.

As there exist quasi-transitive graphs of bounded degree that have
unbounded local crossing number (the 3-dimensional grid for instance, see
\cite{DEW17}), the paragraph above implies that Question \ref{q:<w} has a negative answer.

 \subsection*{Acknowledgements}

  We thank Agelos Georgakopoulos and Joseph MacManus for their
  helpful comments and Kolja Knauer for allowing us to mention his
  negative answer to Question \ref{q:<w}. We also thank a referee for
  mentioning references \cite{EFW12} and \cite{Woe91} and for the
  helpful suggestions. Finally, we thank again Robert Hickingbotham
  for having spotted the issue reported at the end of Section \ref{sec:AN}.

\bibliographystyle{alpha}
\bibliography{biblio}

\end{document}